\pgfplotsset{compat=newest}
\definecolor{myred}{rgb}{0.75,0,0}
\definecolor{mygreen}{rgb}{0,0.5,0}
\definecolor{myblue}{rgb}{0,0,0.65}
\definecolor{antiquewhite}{rgb}{0.98, 0.92, 0.84}
\definecolor{trolleygrey}{rgb}{0.5, 0.5, 0.5}
\definecolor{anti-flashwhite}{rgb}{0.95, 0.95, 0.96}
\definecolor{lavenderblue}{rgb}{0.8, 0.8, 1.0}
\definecolor{sunset}{rgb}{0.98, 0.84, 0.65}
\definecolor{blond}{rgb}{0.98, 0.94, 0.75}
\definecolor{internationalkleinblue}{rgb}{0.0, 0.18, 0.65}
\definecolor{darkorchid}{rgb}{0.6, 0.2, 0.8}
\theoremstyle{plain} 
    \newtheorem{theorem}{Theorem}
    \newtheorem{lemma}{Lemma}
    \newtheorem{proposition}{Proposition}
\theoremstyle{definition}
   \newtheorem{definition}{Definition}
   \newtheorem{example}{Example}
\theoremstyle{remark}    
  \newtheorem{remark}{Remark}
\newcommand\nc{\newcommand}
\nc\on{\operatorname}
\nc\renc{\renewcommand}
\newcommand\bbf{{\mathbb F}}
\newcommand\bn{{\mathbb N}}
\newcommand\br{{\mathbb R}}
\newcommand\bq{{\mathbb Q}}
\newcommand\bp{{\mathbb P}}
\newcommand\bz{{\mathbb Z}}
\newcommand\ba{{\mathbb A}}
\DeclareMathOperator\aut{Aut}
\DeclareMathOperator\id{id}
\DeclareMathOperator\pr{pr}
\DeclareMathOperator\spn{Span}
\DeclareMathOperator\End{End}
\DeclareMathOperator\spec{Spec}
\DeclareMathOperator\pgl{PGL}
\DeclareMathOperator\gl{GL}
\DeclareMathOperator\ch{char}
\title{On Endomorphisms of Projective Algebraic Varieties}
\author{Sami al-Asaad}
\date{23.11.2025}
\begin{document}
\maketitle

\begin{abstract}
We study the algebraic dynamics of endomorphisms of projective varieties. First, we characterize their iterated images, i.e. the intersection of the images of their iterates. Next, we explore the Stein factorizations of the iterates, proving some stability phenomena they exhibit. Finally, we study endomorphisms whose iterates lie in a finite union of connected components of the endomorphism scheme, thereby completing a result of Brion \cite[Proposition 3(i)]{brion2014automorphisms}.
\end{abstract}

\tableofcontents

\section{Introduction of Main Results}
We adopt the following conventions throughout this article: all schemes and morphisms of schemes are assumed to be over a fixed ground field $k$ of arbitrary characteristic. All schemes are assumed separated. A \emph{variety} is a geometrically integral scheme of finite type. A \emph{curve} is a variety of dimension one. We use \cite{hartalggeom} as a general reference for algebraic geometry.

A \emph{semigroup} is a set endowed with an associative law of composition. An element $e$ of a semigroup is called idempotent if $e^2=e$. A \emph{monoid} is a semigroup having a neutral element (whose uniqueness is then evident). A subset $G$ of a monoid $M$ is called a \emph{subgroup} of $M$ if $G$ is a group under the law of $M$ (hence the neutral element of $G$ may differ from that of $M$).

A group scheme is a group object in the category of schemes. Monoid schemes and semigroup schemes are defined similarly. We denote the neutral component of a group scheme $G$ by $G^0$. A (locally) algebraic group is a group scheme (locally) of finite type. (Locally) algebraic monoids and semigroups are defined in a similar way. 

Let $X$ be a projective variety, and let $f:X\to X$ be an endomorphism of $X$. The pair $(X,f)$ is called an \emph{(algebraic) dynamical system}. In this article, we explore some stability phenomena that the iterates $f^n$ exhibit for $n$ large enough. 

\subsection{Iterated Images}
As a first example, note that the following decreasing chain of closed subvarieties of $X$:
\[
    X \supseteq f(X) \supseteq f^2(X) \supseteq f^3(X) \supseteq  \cdots
\]
must stabilize. We call the closed subvariety of $X$ at which this chain stabilizes the \emph{iterated image} of $f$ or of $(X,f)$. We collect some basic properties in the following

\begin{lemma}\label{fny}
Let $Y$ be the iterated image of the dynamical system $(X,f)$. 
\begin{enumerate}
    \item We have $Y = f^m(Y)$ for all $m$.
    \item $f$ induces a surjective and finite endomorphism ${f^m}^{|Y} \circ i : Y \to X \to Y$, where ${f^m}^{|Y}$ and $i$ are the corestriction of $f^m$ to its image and the inclusion, respectively.
    \item Denoting ${f^m}^{|Y} \circ i$ by $f^m_Y$, we have $f^m_Y = (f_Y)^m$ and $f^m_Y \circ f^n_Y = f^{m+n}_Y$ for all $m,n$.
\end{enumerate}
\end{lemma}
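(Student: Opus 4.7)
The plan is to prove the three parts in order, with the main effort concentrated in (2).

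For (1), I invoke the stabilization of the descending chain $X \supseteq f(X) \supseteq f^2(X) \supseteq \cdots$ at $Y$: pick $N$ with $f^n(X) = Y$ for all $n \geq N$. Then for any $m \geq 0$ one computes $f^m(Y) = f^m(f^N(X)) = f^{N+m}(X) = Y$.

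For (2), surjectivity of $f^m_Y : Y \to Y$ is immediate from (1). For finiteness, note that $Y$ is projective (being a closed subvariety of $X$), so $f^m_Y$ is proper, and it suffices to show all fibers are zero-dimensional. My plan is to Stein factorize $f^m_Y = g \circ h$ with $g : Z \to Y$ finite and $h : Y \to Z$ proper with geometrically connected fibers. Surjectivity of $f^m_Y$ forces $g$ surjective, hence $\dim Z = \dim Y$, so $h$ is a surjective morphism between irreducible varieties of the same dimension; combined with its connected fibers, this implies $h$ is birational. The main obstacle is then promoting $h$ from birational to an isomorphism. The key is to exploit the iterated image structure: the exceptional loci $E_n := \{y \in Y : \dim_y f_Y^{-n}(f_Y^n(y)) \geq 1\}$ form an ascending chain of closed subsets of $Y$ which must stabilize by Noetherianity, and combining this stability with the invariance $f^m(Y) = Y$ from (1) should force the stable exceptional set to be empty, giving quasi-finiteness and hence finiteness of $f_Y$ (and then of all its iterates).

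For (3), with $Y$ known to be $f$-invariant from (1), the restriction $f|_Y$ is a well-defined endomorphism of $Y$, and a direct unwinding of definitions yields $f^m_Y = f^m|_Y = (f|_Y)^m = (f_Y)^m$, along with $f^m_Y \circ f^n_Y = (f^m \circ f^n)|_Y = f^{m+n}|_Y = f^{m+n}_Y$.
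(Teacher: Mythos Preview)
Your arguments for (1) and (3) are fine.

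For (2), your plan has a genuine gap. The Stein factorization step is correct (and $h$ is indeed birational, though strictly speaking this follows from the contraction property $h_*\mathscr{O}_Y = \mathscr{O}_Z$, not merely from connectedness of fibers). The problem is the final step: you assert that the stabilization of the ascending chain $E_1 \subseteq E_2 \subseteq \cdots$, together with $f_Y(Y)=Y$, ``should force'' the stable exceptional set $E$ to be empty, but you give no mechanism for this. Stabilization only says that no \emph{new} points acquire a positive-dimensional fiber after stage $N$; it does not explain why the points already in $E$ should not be there. To extract a contradiction one would want an invariance statement such as $f_Y^{-1}(E_n) \subseteq E_{n+1}$ or $f_Y(E_{n+1}) \subseteq E_n$, coupled with a dimension drop, but neither inclusion is evident: for instance, if $y$ is isolated in its $f_Y$-fiber, there is no apparent reason why $f_Y(y) \in E_n$ should force $y \in E_{n+1}$. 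Noetherian stabilization by itself gives no leverage here.

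The paper does not write out a proof of this lemma, but the intended argument for (2) is Lemma~\ref{endofinite} (and this is precisely how the finiteness of $f_Y^n$ is invoked later, in the proof of Theorem~\ref{itimage}): any surjective endomorphism of a projective variety is finite. That lemma is proved not by tracking exceptional loci but by a one-line linear-algebra argument on $\mathrm{N}_1(Y)_{\bq}$: the pushforward $(f_Y)_*$ is surjective by Lemma~\ref{n1xbij}, hence bijective on the finite-dimensional space $\mathrm{N}_1(Y)_{\bq}$, and a contracted curve would give a nonzero class in the kernel. Once $f_Y$ is finite, so is every iterate $f_Y^m$, and your Stein-factorization detour becomes unnecessary.
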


It follows readily that every projective variety $Y$ arises as the iterated image of some dynamical system $(X,f)$: simply take $X = Y$ and $f=\id_X$. We are thus naturally led to impose additional conditions; we shall consider projective varieties $Y$ satisfying the following property:
\begin{equation}\label{propertystar}
Y \text{ is the iterated image of some dynamical system } (X,f), \text{ with } X  \text{ normal.} 
\tag{$\mathcal{I}$}
\end{equation}

\begin{theorem}\label{itimage}
A projective variety $Y$ satisfies Property \eqref{propertystar} if and only if there exists a finite morphism $Y\to \tilde{Y}$ to the normalization $\tilde{Y}$.
\end{theorem}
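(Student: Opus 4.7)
The plan is to prove both implications separately: the forward direction by invoking the universal property of the normalization applied to the dominant corestriction ${f^m}^{|Y}\colon X\to Y$, and the converse by an explicit construction of a normal ambient variety $X$ together with a suitable endomorphism $f$.

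For the forward direction, assume $Y$ is the iterated image of $(X,f)$ with $X$ normal, and choose $m$ large enough that $f^m(X)=Y$. The corestriction ${f^m}^{|Y}\colon X\to Y$ is then a surjective morphism from the normal variety $X$ to $Y$. By the universal property of the normalization $\nu\colon\tilde{Y}\to Y$, there is a unique morphism $\psi\colon X\to\tilde{Y}$ with $\nu\circ\psi={f^m}^{|Y}$. I set $\phi:=\psi\circ i$, where $i\colon Y\hookrightarrow X$ is the inclusion, so that $\nu\circ\phi={f^m}^{|Y}\circ i=f^m_Y$, which is finite by Lemma \ref{fny}. Since $\nu$ is finite (hence separated) and $\nu\circ\phi$ is proper, $\phi$ is proper; moreover, each fibre of $\phi$ is contained in a fibre of $f^m_Y$, so $\phi$ is quasi-finite. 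A proper quasi-finite morphism is finite, and this yields the desired finite map $\phi\colon Y\to\tilde{Y}$.

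For the converse, let $\phi\colon Y\to\tilde{Y}$ be a finite morphism; $\phi$ is automatically surjective since $Y$ and $\tilde{Y}$ are irreducible of the same dimension. Since $Y$ is projective, fix a closed immersion $\iota\colon Y\hookrightarrow\bp^N$ for some $N$, and put $X:=\tilde{Y}\times\bp^N$, a projective normal variety. I then define $e\colon Y\to X$ by $e(y):=(\phi(y),\iota(y))$; since $\iota$ is a closed immersion and $\tilde{Y}$ is separated, $e$ is itself a closed immersion. Finally, let $f\colon X\to X$ be the composition
\[
X\xrightarrow{\pr_1}\tilde{Y}\xrightarrow{\nu}Y\xrightarrow{e}X.
\]
A direct computation yields $f(X)=e(\nu(\tilde{Y}))=e(Y)$, and, using the surjectivity of $\phi$, $f(e(Y))=e(\nu\phi(Y))=e(\nu(\tilde{Y}))=e(Y)$; hence $f^n(X)=e(Y)$ for every $n\ge 1$, so the iterated image of $(X,f)$ is $e(Y)\cong Y$.

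The principal technical subtlety is the extraction of finiteness for $\phi$ in the forward direction: the universal property supplies only a morphism, so one must combine the properness transfer along the separated map $\nu$ with the quasi-finiteness inherited from $f^m_Y$. The converse construction, while more ad hoc, requires only that $\tilde{Y}\times\bp^N$ be normal (which holds since polynomial rings over normal $k$-algebras are normal) and geometrically integral (which follows from geometric integrality of both factors over $k$).
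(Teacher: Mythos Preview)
Your proof is correct and follows essentially the same approach as the paper. In the forward direction you both lift $f^m$ through the normalization via its universal property and then restrict along $i\colon Y\hookrightarrow X$; the only cosmetic difference is that the paper deduces finiteness of $\phi$ by citing \cite[Proposition 6.1.5]{EGAII} (finiteness descends along the finite map $\nu$) and reproves finiteness of $f^m_Y$ via Lemma~\ref{endofinite}, whereas you invoke Lemma~\ref{fny} directly and spell out the ``proper $+$ quasi-finite $\Rightarrow$ finite'' step by hand. For the converse, your construction $X=\tilde{Y}\times\bp^N$ with $f=e\circ\nu\circ\pr_1$ is literally the paper's construction written as a composite, and your verification that $f(X)=e(Y)=f(e(Y))$ matches the paper's chain of inclusions $f(X)\subseteq Y\subseteq f(Y)\subseteq f(X)$.
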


Property \eqref{propertystar} has a more explicit characterization in the case of singular curves over an algebraically closed field of characteristic zero:

\begin{proposition}\label{pinchprop}
Suppose $k$ is algebraically closed and of characteristic zero. Let $Y$ be a projective curve, singular of geometric genus $g$, and let $\nu: \tilde{Y} \to Y$ be the normalization morphism.
\begin{enumerate}
    \item If there exists a finite morphism $h: Y \to \tilde{Y}$, then either $g =0$ or $g=1$.
    \item In the case $g=0$, there exists a finite morphism $h: Y \to \tilde{Y}$.
    \item In the case $g=1$:
        \begin{enumerate}
            \item If $Y$ admits a finite morphism $h: Y \to \tilde{Y}$, then there exists a $k$-rational point $y \in \tilde{Y}(k)$ such that $h \circ \nu$ is an isogeny of the elliptic curve $(\tilde{Y}, y)$, and every fiber of $\nu$ is contained in a translate of $\ker(h \circ \nu)$.
            \item Let $F_1, \ldots, F_m \subseteq \tilde{Y}(k)$ be mutually disjoint (set-theoretic) fibers of $\nu$ (all of which must be finite since $\nu$ is finite). Then $Y$ admits a finite morphism $h: Y \to \tilde{Y}$ if and only if there exists a finite subgroup $F \subseteq \tilde{Y}(k)$ such that each $F_i$ is contained in a translate of $F$.
        \end{enumerate}
\end{enumerate}
\end{proposition}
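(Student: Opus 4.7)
The plan is to study $\phi := h \circ \nu : \tilde{Y} \to \tilde{Y}$ via Riemann--Hurwitz whenever $h$ exists, and then address existence case by case. For part (1), $\phi$ is a finite self-morphism of a smooth projective curve of genus $g$, so Riemann--Hurwitz yields $(2g-2)(\deg\phi - 1) + \deg R = 0$ with $R \geq 0$. When $g \geq 2$ this forces $\deg\phi = 1$, making $\phi$ an isomorphism; then $\phi^{-1} \circ h$ is a left inverse of $\nu$, so $\nu$ is a finite monomorphism and hence a closed immersion, and combined with being birational and surjective, an isomorphism, contradicting the singularity of $Y$. For part (2), with $g = 0$ I embed $Y$ into some $\bp^n$ via a very ample line bundle and pick two generic linear forms whose restrictions to $Y$ have no common zero (possible for $n \geq 2$ by a dimension count); this produces a morphism $Y \to \bp^1 \cong \tilde{Y}$, which is proper, non-constant, and thus finite.

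For part (3a), Riemann--Hurwitz with $g = 1$ forces $\deg R = 0$, so $\phi$ is \'etale. Choosing any origin on $\tilde{Y}$ presents it as an elliptic curve $E$ and writes $\phi(z) = \psi(z) + \phi(0)$ for a unique isogeny $\psi$, by the standard rigidity of morphisms between abelian varieties. Since $\phi$ cannot be an isomorphism (by the argument of part (1)), $\psi \neq \id$, so $\id - \psi$ is a nonzero endomorphism of $E$, hence itself an isogeny and surjective on $k$-points; then $y \in E(k)$ solving $(\id - \psi)(y) = \phi(0)$ is a fixed point of $\phi$. Re-origining at $y$ promotes $\phi$ to a group homomorphism, i.e., an isogeny of $(\tilde{Y}, y)$, and every set-theoretic $\nu$-fiber lies in the corresponding $\phi$-fiber, which is a coset of $\ker \phi$.

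For part (3b), the forward direction follows from (3a) with $F := \ker(h \circ \nu)$. For the converse, fix an origin on $\tilde{Y}$ and form the quotient isogeny $\pi : \tilde{Y} \to \tilde{Y}/F =: E'$ together with its dual $\hat{\pi} : E' \to \tilde{Y}$. Since each $F_i$ lies in a single coset of $F$, $\pi$ is constant on each $F_i$ set-theoretically; the plan is to descend $\pi$ through $\nu$ to a morphism $g : Y \to E'$ and define $h := \hat{\pi} \circ g$, so that $h \circ \nu = \hat{\pi} \circ \pi = [|F|]$ is finite, whence $h$ is finite. The main obstacle is the descent step: set-theoretic constancy on each $F_i$ does not automatically yield a scheme morphism, and one must verify that $\pi^{\#}(\mathcal{O}_{E', \pi(q)})$ lies in $\mathcal{O}_{Y,p} \subseteq \mathcal{O}_{\tilde{Y}, \nu^{-1}(p)}$ at each relevant singular point $p$, i.e., that $\pi$ respects the branch-gluing data encoded by $\mathcal{O}_{Y,p}$. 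This should reduce cleanly to the set-theoretic condition in the ordinary multibranch setting.
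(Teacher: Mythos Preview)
Your arguments for (1), (2), (3a), and the forward direction of (3b) are correct and follow the same route as the paper, which cites \cite[Example~IV.2.5.4]{hartalggeom} rather than writing out Riemann--Hurwitz, and invokes projective Noether normalization for (2) where you project from a generic linear center; these are the same arguments in different clothing. Your fixed-point argument for (3a) via surjectivity of $\id - \psi$ is exactly the paper's.

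The gap you yourself flag in the converse of (3b) is real, and the missing ingredient in the paper is Ferrand's pinching construction. The paper identifies $Y$ with the amalgamated sum $\tilde{Y} \amalg_{\coprod_i F_i} \bigl(\coprod_i \spec(k)\bigr)$ via \cite[Th\'eor\`eme~5.4]{ferrand2003conducteur}; this pushout carries a genuine universal property in the category of schemes, so any morphism out of $\tilde{Y}$ that is constant on each $F_i$ descends to $Y$ automatically, with no local analysis of $\mathcal{O}_{Y,p}$ required. The paper then takes $g = [n]$ for some $n$ with $F \subseteq \tilde{Y}[n]$ (your dual-isogeny choice $\hat\pi \circ \pi = [\lvert F\rvert]$ would serve equally well), observes that $g$ collapses each $F_i$ to a point, and reads off $h: Y \to \tilde{Y}$ directly from the universal property. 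Your proposed local criterion---checking that $\pi^{\#}(\mathcal{O}_{E',\pi(q)}) \subseteq \mathcal{O}_{Y,p}$---is a correct alternative in the seminormal (ordinary multibranch) case, but to make it rigorous you would still need to establish that $Y$ is recovered from $\tilde{Y}$ by gluing the $F_i$ to points, which is precisely the pinching identification; the paper makes this same identification when it asserts that the pinching morphism $p$ coincides with $\nu$.
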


Proposition \ref{pinchprop} does not hold in positive characteristic as Example \ref{pinchingcounterex} will demonstrate.

\subsection{Dynamical Stein Factorizations}
Recall from \cite[Corollary III.11.5]{hartalggeom} that every morphism $f:X \to Y$ of projective varieties factors uniquely as 
\begin{equation}\label{stein}
    \begin{tikzcd}[arrows={-stealth}]
    f:X \ar[r,"g"] & Z \ar[r,"h"] & Y
    \end{tikzcd}
\end{equation}
where $g$ is a \emph{contraction}, i.e. $g_*\mathscr{O}_X = \mathscr{O}_Z$, and $h$ is finite. We call (\ref{stein}) the \emph{Stein factorization} of $f$, and it follows that $g$ is projective with geometrically connected fibers, $Z = \spec f_*\mathscr{O}_X$, and $Z$ is the normalization of $Y$ in $X$. 
It turns out that examining the Stein factorizations of the iterates in a given dynamical system reveals further stability phenomena. 

\begin{definition}\label{dynstein}
Let $(X,f)$ be a dynamical system. We call the Stein factorization of the iterate $f^n$ the $n$th \emph{Stein factorization} of $f$ or of $(X,f)$.
\begin{equation}\label{nstein}
    \begin{tikzcd}[arrows={-stealth}]
    f^n:X \ar[r,"g_n"] & Z_n \ar[r,"h_n"] & X
    \end{tikzcd}
\end{equation}
Moreover, we call $Z_n$ the $n$th \emph{Stein factor} of $f$ or of $(X,f)$.
\end{definition}

It may be observed from (\ref{nstein}) and the surjectivity of $g_n$ that $h_n(Z_n) = h_n(g_n(X)) = f^n(X)$. Hence, denoting by $h_n'$ the corestriction of $h_n$ to its image, we can ``sharpen'' (\ref{nstein}) as follows: 
\begin{equation}\label{nstein2}
    \begin{tikzcd}[arrows={-stealth}]
    X \ar[rr, "f^n"] \ar[dd, "g_n"'] && X \\
    \\
    Z_n \ar[rr, "h'_n"] && f^n(X) \ar[uu, hook]
    \end{tikzcd}
\end{equation}
with $h'_n$ finite and surjective. In particular, $\dim Z_n = \dim f^n(X)$, and the sequence of images $(h_n(Z_n))=(h'_n(Z_n))$ stabilizes (at the same time the chain $(f^n(X))$ does). In Propositions \ref{stability} and \ref{steinfactoriteratedimage}, we describe the behavior of the sequences $(g_n)$, $(Z_n)$, and $(\deg(h'_n))$.

\begin{proposition}\label{stability}
Let $(X,f)$ be a dynamical system, and consider its sequence of Stein factorizations \eqref{nstein2}. Then the sequences $(g_n)$ and $(Z_n)$ of contractions and Stein factors, respectively, stabilize.
\end{proposition}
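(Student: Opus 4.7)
My plan is to push the dependence on $n$ entirely into the finite part of the Stein factorization, leaving the contraction part fixed. By Lemma \ref{fny}, there is an $n_0$ such that $f^n(X) = Y$ (the iterated image) for all $n \geq n_0$, and the map $f_Y: Y \to Y$ induced by $f$ is finite and surjective. For $n \geq n_0$ let $u_n: X \to Y$ denote the corestriction of $f^n$, so that $f^n = j \circ u_n$ where $j: Y \hookrightarrow X$ is the inclusion. A brief calculation using $j \circ f_Y = f \circ j$ gives $u_{n+1} = f_Y \circ u_n$, and hence inductively $u_n = f_Y^{n-n_0} \circ u_{n_0}$.

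Next I would Stein-factorize $u_{n_0}$ just once: write $u_{n_0} = q \circ p$ with $p: X \to Z$ a contraction and $q: Z \to Y$ finite. For every $n \geq n_0$ this yields
\[
f^n = j \circ u_n = \bigl(j \circ f_Y^{n-n_0} \circ q\bigr) \circ p.
\]
The morphism $p$ is a contraction independent of $n$, while $j \circ f_Y^{n-n_0} \circ q : Z \to X$ is finite, being a composition of finite morphisms ($j$ is a closed immersion, and $f_Y$ and $q$ are finite). By the uniqueness of the Stein factorization, $Z_n = Z$ and $g_n = p$ for every $n \geq n_0$, so both sequences stabilize.

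The only step that requires any care is the identity $u_{n+1} = f_Y \circ u_n$, which follows at once from $j \circ (f_Y \circ u_n) = f \circ (j \circ u_n) = f^{n+1}$. Beyond that bookkeeping, the argument is essentially formal; the main conceptual point, and hence the main obstacle, is recognizing that one must factor through $Y$ first in order to package the varying iterates $f^n$ so as to expose a fixed contraction. It is worth noting that $h_n = j \circ f_Y^{n-n_0} \circ q$ need not stabilize, which foreshadows the finer analysis of the degrees $\deg(h'_n)$ carried out in Proposition \ref{steinfactoriteratedimage}.
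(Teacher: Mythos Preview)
Your argument is correct and takes a genuinely different route from the paper's. The paper proves stabilization via the cone-of-curves machinery: it shows that the relative cones $\mathrm{NE}(g_n) = \mathrm{NE}(f^n)$ form an increasing chain of extremal subcones of $\overline{\mathrm{NE}}(X)$, which must stabilize by a convex-geometric lemma (Lemma~\ref{subcone}), and then invokes the fact from \cite{debarre2001higher} that a contraction is determined by its relative cone. You instead factor through the iterated image $Y$, Stein-factorize $u_{n_0}: X \to Y$ once as $q \circ p$, and observe that $f^n = (j \circ f_Y^{\,n-n_0} \circ q) \circ p$ displays a fixed contraction $p$ followed by a finite map, so uniqueness of Stein factorization forces $g_n = p$ for all $n \ge n_0$. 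Your route is more elementary---no Mori cones, no external reference---and as a bonus it shows directly that $N_{\mathrm{Stein}} \le N_{\mathrm{image}}$, hence $N_{\mathrm{Stein}} = N_{\mathrm{image}}$ by Remark~\ref{nsteinnimage}; the paper only obtains this equality later, and only under the additional hypothesis that $f$ is algebraic (Proposition~\ref{mainprop2}). The paper's approach, on the other hand, makes the connection with the Mori cone explicit and gives the independent statement that the chain $\mathrm{NE}(f^n)$ stabilizes.
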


In this case, we call the Stein factor at which the chain $(Z_n)$ stabilizes the \emph{iterated Stein factor} of $f$ or of $(X,f)$.

\begin{proposition}\label{steinfactoriteratedimage}
Let $(X,f)$ be a dynamical system, with iterated image $Y$ and iterated Stein factor $Z$. 
\begin{enumerate}
    \item For every $n \gg 1$, we have a commutative triangle
    \[
    \begin{tikzcd}[arrows={-stealth}]
       & Z \ar[dl, "h'_n"'] \ar[dr, "h'_{n+1}"] & \\
       Y \ar[rr, "f_Y"]  && Y
    \end{tikzcd}
    \]
    where $f_Y$ is the endomorphism induced by $f$, introduced in Lemma \ref{fny}. In particular, the numerical sequence $(\deg(h'_n))$ is eventually geometric, and $\dim Z = \dim Y$.
    \item For every $n \gg 1$, there exists an endomorphism $\varphi_n:Z \to Z$ lifting $f^n_Y$, making the square
    \[
    \begin{tikzcd}[arrows={-stealth}]
    Z \ar[rr, "\varphi_n"] \ar[dd, "h'_n"'] &&  Z \ar[dd, "h'_n"] \\
    &&\\
     Y \ar[rr, "f^n_Y"]   && Y
    \end{tikzcd}
    \]
    commute. In particular, $\deg(\varphi_n) = \deg(f_Y)^n$.
\end{enumerate}
\end{proposition}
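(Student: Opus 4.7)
The plan is to exploit the stabilized contraction $g := g_n : X \to Z$ and Stein factor $Z$ provided by Proposition \ref{stability}, together with the following observation: a contraction, being a morphism with $g_*\mathcal{O}_X = \mathcal{O}_Z$, is automatically an epimorphism in the category of schemes, since the isomorphism $\mathcal{O}_Z \cong g_*\mathcal{O}_X$ recovers any morphism $Z \to W$ from its composite with $g$. This fact will be used repeatedly to ``cancel'' $g$ on the right.

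For part (1), for $n \gg 1$ the Stein factorizations $f^n = h_n \circ g$ and $f^{n+1} = h_{n+1} \circ g$ share the common contraction $g$, so the relation $f^{n+1} = f \circ f^n$ yields $h_{n+1} \circ g = (f \circ h_n) \circ g$, whence the epimorphism property of $g$ forces $h_{n+1} = f \circ h_n$. Since both sides land in the iterated image $Y$ and $f$ restricts to $f_Y$ on $Y$ by Lemma \ref{fny}, corestriction yields the desired triangle $h'_{n+1} = f_Y \circ h'_n$. Multiplicativity of degrees of finite surjective morphisms immediately gives $\deg(h'_{n+1}) = \deg(f_Y) \cdot \deg(h'_n)$, so the sequence $(\deg(h'_n))$ is eventually geometric with ratio $\deg(f_Y)$; the equality $\dim Z = \dim Y$ has already been observed from \eqref{nstein2}.

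For part (2), I would take $\varphi_n := g \circ h_n : Z \to Z$. The computation $f^n \circ h_n = h_n \circ g \circ h_n = h_n \circ \varphi_n$, corestricted to $Y$, is exactly the commutativity $f^n_Y \circ h'_n = h'_n \circ \varphi_n$. The main obstacle is verifying that $\varphi_n$ is finite and surjective, so that its degree is well-defined and the degree formula makes sense. For finiteness, I plan to form the Stein factorization $\varphi_n = h' \circ g'$ and substitute into $f^{2n} = h_n \circ (g h_n) \circ g = (h_n \circ h') \circ (g' \circ g)$; since compositions of contractions are contractions and compositions of finite morphisms are finite, this second factorization must be \emph{the} Stein factorization of $f^{2n}$, forcing $g_{2n} = g' \circ g$. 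Stability gives $g_{2n} = g$, so the epimorphism property of $g$ yields $g' = \id_Z$, whence $\varphi_n = h'$ is finite. Surjectivity of $\varphi_n$ then follows from surjectivity of $h'_n \circ \varphi_n = f^n_Y \circ h'_n$ combined with $\dim Z = \dim Y$: a proper closed subset of $Z$ has strictly smaller dimension and, mapping finitely to $Y$ via $h'_n$, cannot cover $Y$. Finally, taking degrees in the commutative square and cancelling $\deg(h'_n) \neq 0$ gives $\deg(\varphi_n) = \deg(f^n_Y) = \deg(f_Y)^n$.
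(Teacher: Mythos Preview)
Your proof is correct and follows essentially the same route as the paper: both exploit the stabilized contraction $g = g_n = g_{n+1}$ to extract $h'_{n+1} = f_Y \circ h'_n$ from $f^{n+1} = f \circ f^n$, and both set $\varphi_n = g \circ h_n$ (the paper writes this as $g_n \circ i \circ h'_n$). You are more explicit than the paper on two points it leaves implicit---the epimorphism property of contractions, and the finiteness/surjectivity of $\varphi_n$ needed for the degree formula; your uniqueness-of-Stein-factorization trick for finiteness is valid, though a quicker alternative is to observe that $g \circ i : Y \to Z$ is proper with finite fibers (since $h'_n \circ (g \circ i) = f^n_Y$ is finite), hence finite and surjective.
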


\subsection{Algebraic Endomorphisms}
Some generalities are in order. Given a projective variety $X$, the contravariant functor sending a scheme $S$ to the monoid of $S$-endomorphisms of $X \times_k S$, is representable by a locally algebraic monoid, denoted by $\End_X$, and called the \emph{endomorphism scheme} of $X$ (see \cite[Theorem 5.23]{fantechi2005fundamental} for this representability result under the more general assumption that $X$ be a projective scheme over an arbitrary noetherian ground scheme). We have in particular $\End_X(k) = \End(X)$. By \cite[I,\S 4,6.5]{demazure1970groupes}, there exist an \'etale scheme $\pi_0\End_X$, called the \emph{scheme of connected components} of $\End_X$, and a morphism $q_{\End_X}:\End_X \to \pi_0\End_X$, called the \emph{canonical projection}, satisfying the following universal property: for every \'etale scheme $E$ and every morphism $f: \End_X \to E$, there is a unique morphism $g:\pi_0\End_X \to E$ such that $f=g\circ q_{\End_X}$. Furthermore, $q_{\End_X}$ is faithfully flat, and its fibers are the connected components of $\End_X$. It is not hard to show that $q_{\End_X}$ is of finite type.

Similarly, we have the \emph{automorphism scheme} $\aut_X$; a locally algebraic group, as well as a canonical projection $q_{\aut_X}: \aut_X \to \pi_0 \aut_X$, which is the restriction of $q_{\End_X}$ to $\aut_X$ by \cite[Remark 15(i)]{brion2014algebraic}.

Finally, there exists a \emph{degree map} $\deg: \End_X \to \bn$ that is locally constant (this is a consequence of the pronciple of conservarion of number, see \cite[\S 10.2]{fulton2013intersection}), and such that $\aut_X = \deg^{-1}(1)$.

On the abstract-algebraic level, recall that given a semigroup $S$ and $a \in S$, the set $\{ a^n \mid n \geq 1 \}$ is a subsemigroup of $S$, denoted by $\langle a \rangle$ and called the \emph{cyclic} (or \emph{monogenic}) subsemigroup of $S$ generated by $a$. The \emph{order} of $a$ is defined as the cardinality of $\langle a \rangle$. By \cite[\S 1.6]{clifford1961semigroups}, there are exactly two possibilities:
\begin{itemize}
    \item For every $i \neq j$ we have $a^i \neq a^j$. Then $\langle a \rangle$ is isomorphic to the additive semigroup $\bn^*$. In particular, $\langle a \rangle$ contains no idempotents.
    \item There exists a smallest positive integer $s$ such that $a^s=a^r$ for some $r < s$. Then we can write
        \[
            \langle a \rangle =\{ a, a^2, \ldots , a^{r-1} \} \coprod K_a,
        \]
    where $a, a^2, \ldots , a^{r-1}$ are pairwise distinct, and $K_a := \{a^r, a^{r+1}, \ldots , a^{s-1}\}$ is a cyclic group. In particular, $\langle a \rangle$ contains a unique idempotent, namely the neutral element of the group $K_a$. We call the integers $r$ and $s-r$ the \emph{index} and \emph{period} of $a$, respectively. It follows that $\langle a \rangle$ is determined (up to isomorphism) by the index and period of $a$.
\end{itemize}

With the above setting, we formulate
\begin{definition}\label{defsfgf}
Let $(X,f)$ be a dynamical system.
\begin{itemize}
    \item The schematic closure of the cyclic subsemigroup $\langle f \rangle$ in $\End_X$ is a reduced closed subsemigroup scheme of $\End_X$, that we denote by $S(f)$. If $f$ is an automorphism, we denote by $G(f)$ the schematic closure of the cyclic subgroup $\langle f \rangle$ in $\aut_X$.
    \item We say that $f$ is \emph{algebraic} (or \emph{bounded}) if any of the following equivalent conditions is satisfied:
    \begin{enumerate}
        \item $f$ belongs to an algebraic subsemigroup of $\End_X$.
        \item The iterates $f^n$ all belong to a finite union of connected components of $\End_X$.
        \item The subsemigroup scheme $S(f)$ of $\End_X$ is algebraic.
\end{enumerate}
    If $f$ is an automorphism, we say it is algebraic (as an automorphism) if the subgroup scheme $G(f)$ of $\aut_X$ is algebraic.
\end{itemize}
\end{definition}

The notion of algebraic endomorphism was first introduced in \cite[Definition 1]{brion2014automorphisms}.

Clearly, both $S(f)$ and $G(f)$ are commutative. Moreover, $G(f) = S(f) \cap \aut_X$ for every $f \in \aut_X(k)$. Since $\deg(f^n) =1$ for all $n$, we have $\deg|_{S(f)} = 1$ by continuity of the degree map and density of $\langle f \rangle$ in $S(f)$. Hence $S(f) \subseteq \aut_X$ and $G(f) = S(f)$. It also follows at once that every algebraic automorphism is an algebraic endomorphism, as one would expect.

As an example, let $(X,f)$ be a dynamical system, with $f$ finite, of degree no less than two. Then the iterates $f^n$ belong to infinitely many connected components of $\End_X$, and $f$ is not algebraic: indeed, $\deg(f^n) = \deg(f)^n \to \infty$ in the limit $n \to \infty$, and the degree remains the same on each connected component of $\End_X$. In particular, the algebraic endomorphisms of a curve are exactly those of degree no greater than one, i.e. the constant endomorphisms as well as the automorphisms. See Lemmas \ref{falgqsf} and \ref{lemmaaut0} and Remark \ref{algnotclosedcomp} for more examples.

As we shall see shortly, the structure of $S(f)$ is closely related to that of a certain type of algebraic groups known as monothetic. We use \cite{falcone2007monothetic} as a reference, noting that Definition \ref{defmonothetic} differs slightly from the (more general) definition introduced therein; however, our definition will be sufficient for the subsequent applications.

\begin{definition}\label{defmonothetic}
An algebraic group $G$ is called \emph{monothetic} if there exists $g \in G(k)$ such that the cyclic subgroup $\langle g \rangle$ of $G$ is schematically dense in $G$. The $k$-rational point $g$ is then called a \emph{generator} of $G$.
\end{definition}

For instance, if $f$ is an algebraic automorphism, the algebraic group $G(f)$ is monothetic with generator $f$.

\begin{remark}\label{monosmooth}
It follows immediately that a monothetic algebraic group $G$ is commutative. Moreover, $G$ is geometrically reduced by \cite[Proposition 2.48]{milne2017groups}, and hence smooth by \cite[Proposition 1.26]{milne2017groups}. Finally, the structure of $G$ is given in the main theorem of \cite{falcone2007monothetic}.
\end{remark}

\begin{theorem}\label{mainthm0}
Let $(X,f)$ be a dynamical system, with $f$ algebraic.
\begin{enumerate}
    \item There exists a unique idempotent $e$ in $S(f)$.
    \item The subsemigroup scheme $eS(f) \subseteq S(f)$ is a closed algebraic subgroup of $\End_X$, denoted by $G$, whose neutral element is $e$. We have $f^n \in G$ for all $n \gg 1$.
    \item Let $m$ be the smallest positive integer such that $f^n \in G$ for all $n \geq m$. Then
    \begin{equation}\label{sfdecomp}
        S(f) = \{ f, f^2, \ldots , f^{m-1} \} \coprod G,
    \end{equation}
    where the iterates $f, f^2, \ldots , f^{m-1}$ are pairwise distinct. In particular, $G$ is a union of connected components of $S(f)$.
    \item The algebraic group $G$ is monothetic: we have 
    \[
    G = S(ef)  = \overline{\{ (ef)^\ell \mid \ell \geq 0 \}} = \overline{\{ (ef)^\ell \mid \ell \in \bz \}}.    
    \]
    In particular, $G$ is smooth.
    \item $G$ is the smallest closed algebraic subgroup of $\End_X$ containing $f^n$ for all $n \gg 1$.
\end{enumerate}
\end{theorem}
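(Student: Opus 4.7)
The strategy is to descend the structural question to the finite commutative semigroup $\pi_0 S(f)$ via the canonical projection, use the classification of cyclic semigroups recalled in the introduction to locate the unique idempotent component $E\subseteq S(f)$, construct the idempotent $e\in E$ by a Noetherian stabilization argument inside $E$, and derive parts (2)--(5) by identifying $G:=eS(f)$ as both the kernel of $S(f)$ and the monothetic closure $S(ef)$, to which the main theorem of \cite{falcone2007monothetic} applies.

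For part (1), multiplication on $S(f)$ descends to a well-defined commutative law on $\pi_0 S(f)$ via $[C_1]\cdot[C_2]:=[C_1\cdot C_2]$ (the product of connected subsets being connected), making $\pi_0 S(f)$ a finite cyclic commutative semigroup generated by $[f]$. The classification recalled above then yields $\pi_0 S(f)=\{[f],\ldots,[f]^{r-1}\}\sqcup K$, where $r$ is the index, $K$ is a cyclic group, and $[E]\in K$ is the unique idempotent class, corresponding to a component $E\subseteq S(f)$ with $E\cdot E\subseteq E$. Let $n_0$ be the smallest positive integer with $f^{n_0}\in E$. The descending chain of closed subsets $E\supseteq\overline{f^{n_0}E}\supseteq\overline{f^{2n_0}E}\supseteq\cdots$ stabilizes by Noetherianity at a closed subset $M\subseteq E$ with $\overline{f^{n_0}M}=M$, and the iterates $\{f^{jn_0}:j\geq J\}$ are dense in $M$ for $J$ large, so every element of $M$ acts by a dominant translation on $M$. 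The commutative case of the kernel theorem for algebraic semigroups (cf.\ \cite{brion2014automorphisms}) then promotes $M$ to a closed algebraic subgroup of $\End_X$ whose identity $e$ is the desired idempotent of $S(f)$. Uniqueness follows because any other idempotent $e'\in S(f)$ projects to $[E]$, hence lies in $E$; the product $ee'\in M$ is an idempotent of the algebraic group $M$, forcing $ee'=e$ and symmetrically $ee'=e'$.

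Parts (2)--(5) then follow in sequence. For (2), $G:=eS(f)$ is identified with the kernel $\bigcap_k\overline{f^k S(f)}$ of $S(f)$: the containment $\supseteq$ holds because $e\in M=\overline{f^{jn_0}M}\subseteq\overline{f^k S(f)}$ for $jn_0\geq k$, while the reverse follows from the group structure of $M$ together with commutativity. This realizes $G$ as a closed algebraic subgroup of $\End_X$ with identity $e$, and $f^n\in G$ for $n\gg 1$ follows from $f^n\in f^k S(f)$ for every $k\leq n$ combined with the kernel characterization. For (3), taking $m:=r$, the iterates $f,\ldots,f^{m-1}$ are pairwise distinct because their images in $\pi_0 S(f)$ are the distinct tail elements $[f],\ldots,[f]^{m-1}$, and the inclusion $f^n\in G$ for $n\geq m$ has already been observed. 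For (4), the identities $(ef)^\ell=ef^\ell$ (from commutativity and $e^2=e$) together with continuity of $m_e$ and the closedness of $G=eS(f)$ yield
\[
    \overline{\{(ef)^\ell:\ell\geq 1\}}=\overline{e\cdot\langle f\rangle}=eS(f)=G,
\]
so $G=S(ef)$ is monothetic with generator $ef$; smoothness is then Remark \ref{monosmooth}, and the extension from $\ell\geq 0$ to $\ell\in\bz$ uses the group structure of $G$. Part (5) is immediate: any closed algebraic subgroup $H\subseteq\End_X$ containing $f^n$ for $n\gg 1$ must contain $\overline{\{f^n:n\gg 1\}}=G$, since $\{f^n:n\geq m\}$ is Zariski-dense in $G$ by the kernel characterization.

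The main obstacle is the construction of the idempotent $e$ in part (1), specifically promoting the stable closed subsemigroup $M\subseteq E$ to an algebraic group. This is the commutative case of the kernel theorem for algebraic semigroups, but a self-contained argument would exploit the density of the iterates of $f^{n_0}$ in $M$ together with Noetherianity to establish bijectivity of the translations $m_x:M\to M$ for every $x\in M$. Once $e$ is available, the remaining parts follow from the dual characterization of $G$ as the kernel of $S(f)$ and as the monothetic closure $S(ef)$, together with the main theorem of \cite{falcone2007monothetic}.
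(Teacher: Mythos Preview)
Your architecture—descend to the finite cyclic semigroup $\pi_0 S(f)$, locate the idempotent component $E$, build $e$ by Noetherian stabilization inside $E$, then set $G=eS(f)$—parallels the paper's use of $\pi_0$ and Noetherianity in Lemma~\ref{mainlemma1}, but several of your key steps are asserted rather than proved. The most serious gap is in part~(3): you set $m:=r$ (the index of $[f]$ in $\pi_0 S(f)$) and claim that $f^n\in G$ for all $n\ge r$ ``has already been observed'', but in part~(2) you only obtained this for $n\gg 1$. Knowing $[f^n]\in K$ tells you that $ef^n$ and $f^n$ lie in the same connected component, not that $ef^n=f^n$; to pass from one to the other you would need $G$ to already be a union of components of $S(f)$, which is precisely the conclusion you are after. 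The paper sidesteps this circularity: it never identifies $m$ with $r$, but exhibits a specific $n_0$ (from Lemma~\ref{mainlemma1}) with $f^{n_0}\in G$, lets $m$ be minimal, proves $S(f)=\{f,\dots,f^{m-1}\}\cup G$ by the direct closure computation $\overline{\{f^n:n\ge 1\}}=\{f,\dots,f^{m-1}\}\cup\overline{\{f^n:n\ge m\}}\subseteq\{f,\dots,f^{m-1}\}\cup G$, and checks disjointness and distinctness by hand.

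Two further gaps. In part~(5), the density of $\{f^n:n\ge m\}$ in $G$ does not follow from your kernel description $G=\bigcap_k\overline{f^kS(f)}$; the claim is true, but it requires the nontrivial fact that a closed subsemigroup of an algebraic group is a subgroup, together with the identity $ef=(f^m)^{-1}f^{m+1}$ computed inside $G$. The paper's argument is shorter and avoids density: any such $H$ contains $S(f^{n_1})\supseteq S(f^{n_0})\ni e$, so $e$ is the identity of $H$; multiplying the inverse of $f^{n_1}$ in $H$ by $f^{n_1+1}$ gives $ef\in H$, whence $G=S(ef)\subseteq H$. Finally, in part~(1) ``symmetrically $ee'=e'$'' is not justified—the roles of $e$ and $e'$ are not symmetric, since only $e$ has been exhibited as the identity of a group containing $ee'$. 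The paper's uniqueness argument is a one-liner: any idempotent $e_1\in S(f)$ satisfies $e_1=e_1^{n_0}\in S(f^{n_0})$ by continuity of the $n_0$-th power map on $S(f)=\overline{\langle f\rangle}$, and $S(f^{n_0})$ is a group.
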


In this case, we call $G = eS(f)$ the monothetic group \emph{associated} with $f$ or with $(X,f)$.

\begin{theorem}\label{mainthm1}\leavevmode
\begin{enumerate}
    \item Let $(X,f)$ be a dynamical system, with $f$ algebraic, and let $e$ be the unique idempotent in $S(f)$. The algebraic semigroup $S(f)$ is uniquely determined by the triple $(G, g, m)$ consisting of the monothetic group $G= eS(f)$ associated with $f$, the $k$-rational generator $g=ef \in G(k)$, and the smallest positive integer $m$ such that $f^n \in G$ for all $n \geq m$.
    \item Conversely, given a triple $(H, h, p)$, where $H$ is a monothetic algebraic group with generator $h\in H(k)$, and $p$ is a positive integer, there exists a dynamical system $(X,f)$ (with $f$ algebraic) such that $H$ contains $f^n$ for all $n \geq p$, with $p$ being the smallest positive integer having this property, and $H$ is the monothetic group associated to $f$.
\end{enumerate}
\end{theorem}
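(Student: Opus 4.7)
The plan is to handle the two parts separately. For part (1), I would begin with the decomposition $S(f) = \{f, f^2, \ldots, f^{m-1}\} \coprod G$ from Theorem \ref{mainthm0}; this already determines the underlying scheme of $S(f)$ as the disjoint union of $G$ with $m-1$ isolated $k$-rational points. It then remains to recover the semigroup law, which by commutativity can be checked on each stratum of $S(f) \times S(f)$. On $G \times G$ it is the given group law of $G$. For two tail elements, $f^i \cdot f^j = f^{i+j}$ is again a tail element when $i+j < m$, and equals $g^{i+j} \in G$ otherwise (using $f^n = (ef)^n = g^n$ for $n \geq m$). For a tail $f^i$ and a group element $b \in G$, commutativity together with $b = eb$ gives $f^i \cdot b = g^i \cdot b$. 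All three rules are determined by the triple $(G, g, m)$.

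For part (2), I would produce an explicit dynamical system realizing a given triple $(H, h, p)$. First, fix a projective variety $Y$ carrying a faithful action of $H$, so that $H$ embeds as a closed algebraic subgroup of $\aut_Y$; such a $Y$ always exists by combining the structural theory of monothetic groups with standard equivariant compactification results. Choose a base point $t_0 \in \bp^1(k)$ and set $X := Y \times (\bp^1)^p$, which is a projective variety since products of geometrically integral $k$-schemes are geometrically integral. Define
\[
f \colon X \to X, \qquad (y, t_1, \ldots, t_p) \longmapsto (h \cdot y,\, t_0, t_1, \ldots, t_{p-1}).
\]
A direct computation shows that $f^n$ replaces the first $\min(n,p)$ of the $t_i$-coordinates by $t_0$, shifts the rest, and acts by $h^n$ on the $Y$-factor. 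In particular, for $n \geq p$ one has $f^n = \iota \circ h^n \circ \pi$, where $\pi \colon X \to Y$ is the projection and $\iota \colon Y \to X$ is the section $y \mapsto (y, t_0, \ldots, t_0)$.

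Since $\pi \circ \iota = \id_Y$, the rule $\Phi(h') := \iota \circ h' \circ \pi$ defines a multiplicative morphism $\Phi \colon H \to \End_X$ that is a monomorphism of group schemes (the functorial kernel is trivial). By Theorem \ref{mainthm0} the monothetic group $eS(f)$ is a closed algebraic subgroup of $\End_X$ containing $\Phi(\langle h \rangle)$; continuity of $\Phi$ together with the schematic density of $\langle h \rangle$ in $H$ gives $\Phi(H) \subseteq eS(f)$. The resulting morphism $\Phi \colon H \to eS(f)$ is then a closed immersion with schematically dense image, hence an isomorphism. Under this identification, $f^n \in H$ for every $n \geq p$, and $p$ is minimal since for $n < p$ the iterate $f^n$ depends non-trivially on some $t_i$-coordinate and therefore does not factor through $Y$. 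Finally, a short verification gives $ef = \Phi(h)$, so the triple $(eS(f), ef, p)$ corresponds via $\Phi$ to $(H, h, p)$, as required. I expect the main obstacles to be (i) ensuring the existence of a faithful $H$-action on a projective variety for arbitrary monothetic $H$, and (ii) upgrading the functorial injectivity of $\Phi$ to a scheme-theoretic isomorphism onto $eS(f)$ — both standard but requiring care in the monoid setting of $\End_X$.
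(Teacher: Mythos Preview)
Your proposal is correct and follows essentially the same route as the paper. Part~(1) is identical: the decomposition from Theorem~\ref{mainthm0} plus a case analysis of the product on tail/tail, tail/group, and group/group pairs. Part~(2) uses the very same construction: the paper also takes $X=(\bp^1_k)^p\times_k Y$ with $Y$ a projective variety carrying a faithful $H$-action, and defines $f$ by the same ``shift the $\bp^1$-coordinates and act by $h$ on $Y$'' rule; your map $\Phi(h')=\iota\circ h'\circ\pi$ is exactly the implicit identification the paper uses when it writes ``$f^n\in H$'' and ``$ef=h$''. Your write-up is in fact more careful than the paper's in making this identification explicit and in arguing that $\Phi(H)=eS(f)$; the only place where the paper is more precise is the existence of a faithful $H$-action on a projective $Y$, which it isolates as Proposition~\ref{monoinaut} and proves for \emph{arbitrary} algebraic groups via the decomposition $G=H\cdot G_{\mathrm{ant}}$ and a twisted-product construction, rather than appealing to ``structural theory of monothetic groups with standard equivariant compactification results'' as you do.
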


The proof of the above theorem uses a Cayley-style result for algebraic groups: namely, that every algebraic group is isomorphic to a subgroup of the automorphism scheme $\aut_X$ of some projective variety $X$; this is established in Proposition \ref{monoinaut}.

The next result describes the iterated image and the dynamical Stein factorization of an algebraic endomorphism. First we recall that an idempotent endomorphism $e$ of $X$ is a \emph{retraction} $r$ onto its image, i.e. $e$ restricts to the identity on its image: $r \circ i = \id_{e(X)}$ with $i: e(X) \to X$ being the inclusion. Conversely, given a retraction $r: X \to Y$ to a closed subvariety $i: Y \to X$, the composition $i \circ r$ is an idempotent endomorphism of $X$.

\begin{proposition}\label{mainprop2}
Let $(X,f)$ be a dynamical system, with $f$ algebraic, and let $e$ be the unique idempotent in $S(f)$. Write $e = i\circ r$ with $r$ and $i$ as above, and set $Y=e(X)$. Let $m$ be the smallest positive integer such that $f^n \in G= eS(f)$ for all $n \geq m$.
\begin{enumerate}
    \item We have $f^n(X)= Y$ for all $n\geq m$. In particular, $Y$ is the iterated image of $f$.
    \item For any $n \geq m$, the following diagram is the $n$th Stein factorization of $(X,f)$:
    \[
    \begin{tikzcd}[arrows={-stealth}]
    X \ar[rr, "f^n"] \ar[dd, "r"'] && X \\
    \\
    Y \ar[rr, "f^n_Y"] && Y \ar[uu, hook, "i"']
    \end{tikzcd}
    \]
    In particular, $Y$ is the iterated Stein factor of $(X,f)$.
    \item We have $m \leq \dim X$.
\end{enumerate}
\end{proposition}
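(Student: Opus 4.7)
My approach is to address the three parts in order, with the core conceptual work occurring in part (2).

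For (1), I will exploit that $G$ is a group with neutral element $e$ and that $f^n \in G$ for $n \geq m$. The inclusion $f^n(X) \subseteq Y = e(X)$ follows immediately from $f^n = e \circ f^n$. Conversely, the group inverse $(f^n)^{-1} \in G$ satisfies $f^n \circ (f^n)^{-1} = e$, whence $Y = e(X) \subseteq f^n(X)$. Together these give $f^n(X) = Y$.

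For (2), I first observe that $f^n_Y$ is an automorphism of $Y$ for $n \geq m$. Indeed, since $e \circ i = i$ (from $r \circ i = \id_Y$) and both $f^n$ and $(f^n)^{-1}$ lie in $G$ (hence factor through $Y$), restricting the identities $f^n \circ (f^n)^{-1} = e = (f^n)^{-1} \circ f^n$ to $Y$ via $i$ and cancelling the closed immersion $i$ shows that $f^n_Y$ and $(f^n)^{-1}_Y$ are mutually inverse endomorphisms of $Y$. A short diagram chase using $f^n \circ e = f^n$ then yields $r \circ f^n = f^n_Y \circ r$, so that
\[ f^n = e \circ f^n = i \circ r \circ f^n = i \circ f^n_Y \circ r, \]
establishing commutativity of the displayed square. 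Since $i \circ f^n_Y$ is finite (composition of a finite morphism with a closed immersion), uniqueness of Stein factorization will identify this diagram with the $n$-th Stein factorization of $f$, provided I show $r$ is a contraction.

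To see that $r$ is a contraction, I invoke Proposition \ref{stability} to fix a stabilized Stein factorization $f^n = h_n \circ g$ with $g: X \to Z$ the eventually constant contraction and $h_n: Z \to X$ finite. Since $h_n(Z) = f^n(X) = Y$, I write $h_n = i \circ h'_n$ with $h'_n: Z \to Y$ finite surjective. Cancelling $i$ in $i \circ h'_n \circ g = i \circ f^n_Y \circ r$ gives $h'_n \circ g = f^n_Y \circ r$; composing with $i$ on the right and using $r \circ i = \id_Y$ yields $h'_n \circ (g \circ i) = f^n_Y$. The key observation is that $t := g \circ i: Y \to Z$ is then a section of the separated morphism $(f^n_Y)^{-1} \circ h'_n: Z \to Y$, hence a closed immersion. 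Since $Z$ is integral (as $X$ is integral and $g$ a surjective contraction, so that $\mathcal{O}_Z = g_*\mathcal{O}_X$ has integral stalks) and $\dim Z = \dim Y$ (from $h'_n$ being finite surjective), the closed subscheme $t(Y) \subseteq Z$ has full dimension in the integral scheme $Z$, forcing $t(Y) = Z$. Thus $t$ is an isomorphism, under which $g$ and $h'_n$ are identified with $r$ and $f^n_Y$ respectively, yielding the claimed Stein factorization.

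For (3), minimality of $m$ together with the equivalence $f^n \in G \iff f^n(X) \subseteq Y$ (from the proof of (1)) implies $f^i(X) \supsetneq Y$ for $1 \leq i < m$. In the case $Y \subsetneq X$ (otherwise $m = 1$ and the bound holds trivially), this yields a chain of strict inclusions $X \supsetneq f(X) \supsetneq \cdots \supsetneq f^m(X) = Y$ of irreducible closed subvarieties of $X$, each drop strictly decreasing the dimension. Hence $m \leq \dim X - \dim Y \leq \dim X$. The main obstacle throughout is the contraction argument in (2): showing the Stein factor of $f^n$ coincides with the iterated image $Y$. The retraction $r$ supplies a section of the Stein cover $Z \to Y$, and integrality of $Z$ collapses this section to an isomorphism.
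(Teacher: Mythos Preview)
Your argument is correct. Part (1) matches the paper. For part (2), the paper is much more direct: it writes $f^n = f^n e = (i \circ f^n_Y) \circ r$ and simply asserts that $r$ is a contraction \emph{because it is a retraction of projective varieties}. The justification for this fact is essentially your section argument, but applied to the Stein factorization $r = h' \circ g'$ of $r$ itself: then $g' \circ i$ is a section of the finite map $h'$, and integrality plus equality of dimensions force $h'$ to be an isomorphism. Your route through Proposition~\ref{stability} and the stabilized Stein factor $Z$ of $f^n$ reaches the same conclusion but is unnecessarily indirect; in particular, working with $r$ instead of $f^n$ lets you dispense with the preliminary verification that $f^n_Y$ is an automorphism (which you only needed in order to produce a map of which $t$ is a section). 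For part~(3), both arguments reduce to the strict chain $X \supsetneq f(X) \supsetneq \cdots \supsetneq f^m(X)$, and your bound $m \le \dim X - \dim Y$ is in fact slightly sharper. One small gap: the equivalence $f^n \in G \iff f^n(X) \subseteq Y$ you invoke has only its forward direction established in your~(1). The converse---if $f^n(X) \subseteq Y$ then $ef^n = f^n$ since $e|_Y = \id_Y$, whence $f^n \in eS(f) = G$---is immediate (and is exactly how the paper argues $m \le N_{\mathrm{image}}$), but should be stated.
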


Finally, we relate the algebraicity of an endomorphism $f$ to that of the induced endomorphism $f_Y$ of the iterated image:
\begin{theorem}\label{mainthm2}
Let $(X,f)$ be a dynamical system, with iterated image $Y$. The following assertions are equivalent:
        \begin{enumerate}
            \item $f$ is algebraic.
            \item All iterates of $f$ are algebraic.
            \item Some iterate of $f$ is algebraic.
            \item The induced endomorphism $f_Y: Y \to Y$ is an algebraic automorphism.
        \end{enumerate}
In this case, the monothetic group $G$ associated with $f$ is isomorphic to $G(f_Y)$; an algebraic subgroup of $\aut_Y$. In particular, $\dim G \leq \dim \aut_Y$.
\end{theorem}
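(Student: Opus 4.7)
The plan is to establish the cycle $(1) \Rightarrow (2) \Rightarrow (3) \Rightarrow (1)$ and separately $(1) \Leftrightarrow (4)$, extracting the isomorphism $G \cong G(f_Y)$ along the way. The implication $(1) \Rightarrow (2)$ follows from $\langle f^k\rangle \subseteq \langle f\rangle$, which yields $S(f^k) \subseteq S(f)$ as a closed subsemigroup, hence algebraic whenever $S(f)$ is. The implication $(2)\Rightarrow (3)$ is trivial. For $(3)\Rightarrow(1)$, if $f^k$ is algebraic, I would write each iterate as $f^{kn+j} = f^j \circ f^{kn}$ for $0 \le j < k$, and observe that right-composition with the fixed element $f^j$ is a scheme morphism $\End_X \to \End_X$ sending each connected component into a single connected component. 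Since $\{f^{kn}\}_n$ lies in finitely many components, so does each shifted sequence, and hence their union.

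For $(1) \Rightarrow (4)$, I invoke Proposition \ref{mainprop2} to write the unique idempotent as $e = i \circ r$, with $r : X \to Y$ a retraction onto the iterated image. For $\varphi \in G = eS(f)$, the identities $\varphi \circ e = \varphi$ and $e \circ i = i$ force the factorization $\varphi = i \circ (\varphi|_Y) \circ r$, so that restriction
\[
\rho : G \longrightarrow \aut_Y, \qquad \varphi \longmapsto \varphi|_Y
\]
is a well-defined homomorphism of algebraic groups, landing in $\aut_Y$ because $\rho(e) = \id_Y$. A direct computation gives $\rho(ef) = f_Y$; since $G$ is monothetic with generator $ef$ by Theorem \ref{mainthm0}(4), the image $\rho(G)$ is a closed monothetic algebraic subgroup of $\aut_Y$ with generator $f_Y$, hence equal to $G(f_Y)$. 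This proves $f_Y$ is algebraic, and it is an automorphism since $f_Y = f_Y^{n+1} \circ (f_Y^n)^{-1}$ for $n$ large with $f_Y^n \in \rho(G) \subseteq \aut_Y$. To upgrade $\rho$ to a scheme isomorphism I would exhibit the inverse $\tilde{\rho} : G(f_Y) \to G$, $\alpha \mapsto i \circ \alpha \circ r$, and verify $\tilde{\rho} \circ \rho = \id_G$ and $\rho \circ \tilde{\rho} = \id_{G(f_Y)}$ using $r \circ i = \id_Y$ and the factorization of elements of $G$.

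For $(4) \Rightarrow (1)$, the key is to use Propositions \ref{stability} and \ref{steinfactoriteratedimage}. The former gives $N$ such that for $n \ge N$, the Stein factor $Z_n$ and contraction $g_n$ stabilize to $Z$ and $g : X \to Z$. The latter yields the relation $h'_{n+1} = f_Y \circ h'_n$, which iterates to $h'_n = f_Y^{n-N} \circ h'_N$, so that
\[
f^n = i \circ f_Y^{n-N} \circ h'_N \circ g \qquad (n \ge N).
\]
The map $\Phi : \aut_Y \to \End_X$, $\alpha \mapsto i \circ \alpha \circ h'_N \circ g$, is a scheme morphism, and by the algebraicity of $f_Y$ the set $\{f_Y^{n-N}\}_{n \ge N}$ meets only finitely many components of $\aut_Y$. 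Since $\Phi$ sends each component into a single component of $\End_X$, the iterates $f^n$ for $n \ge N$ occupy only finitely many components of $\End_X$, and the finitely many early iterates are harmless.

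I expect the main difficulty to be twofold. In $(4) \Rightarrow (1)$, the argument depends on having the identity $h'_n = f_Y^{n-N} \circ h'_N$, which requires the stabilization of $g_n$ and $Z_n$ and the compatibility triangle from Proposition \ref{steinfactoriteratedimage}; producing the single ``universal'' morphism $\Phi$ is the critical structural step. In $(1) \Rightarrow (4)$, the delicate point is that exhibiting a bijection of $k$-points between $G$ and $G(f_Y)$ is not enough; the scheme-theoretic isomorphism has to come from an explicit inverse, which is why the decomposition $e = i \circ r$ and the formula $\varphi = i \circ \varphi|_Y \circ r$ for $\varphi \in G$ are indispensable.
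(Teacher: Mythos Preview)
Your argument is correct, and for $(1)\Rightarrow(2)\Rightarrow(3)\Rightarrow(1)$ and $(1)\Rightarrow(4)$ it essentially coincides with the paper's: the paper likewise uses $S(f^n)\subseteq S(f)$, translates of $S(f^n)$ (respectively of its group part $H$) by the finitely many $f^r$, and the conjugation-by-retraction isomorphism $e\End_X e \to \End_Y$, $g\mapsto r g i$, with inverse $h\mapsto i h r$, which is exactly your pair $(\rho,\tilde\rho)$ restricted to $G$.

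The genuine difference is in closing the loop from $(4)$. The paper proves $(4)\Rightarrow(3)$ by a shorter route that avoids Stein factorizations entirely: by Lemma~\ref{lemmaaut0}(2) one picks $\ell$ with $f_Y^{\ell}\in\aut_Y^0(k)$ (and large enough that $f^{\ell}(X)=Y$), and then the single morphism $\aut_Y^0\to \End_X$, $\alpha\mapsto i\circ\alpha\circ f^{\ell}$, connects $f^{\ell}$ to $f^{2\ell}$, hence all $f^{n\ell}$ lie in one component and $f^{\ell}$ is algebraic. Your route through Propositions~\ref{stability} and~\ref{steinfactoriteratedimage} is heavier but also valid: once $g_n$ and $Z_n$ stabilise, the relation $h'_{n+1}=f_Y\circ h'_n$ gives $f^n=i\circ f_Y^{\,n-N}\circ h'_N\circ g$, and your map $\Phi:\aut_Y\to\End_X$ does the job. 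The payoff of your approach is that it makes visible how the dynamical Stein factorisation already encodes algebraicity; the payoff of the paper's approach is economy, needing only the elementary fact that an algebraic automorphism has an iterate in the neutral component.
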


\section{Proofs of Statements About Iterated Images}
\subsection{Proof of Theorem \ref{itimage}}
We begin by recalling that for a projective variety $X$, the quotient of the free abelian group generated by curves on $X$, by numerical equivalence, is denoted by $\mathrm{N}_1(X)$. The group $\mathrm{N}_1(X)$ is free of finite rank; this rank is called the \emph{Picard number} of $X$. Every dominant morphism $f:X \to Y$ of projective varieties induces a $\bq$-linear map $f_*:\mathrm{N}_1(X)_{\bq} \to \mathrm{N}_1(Y)_{\bq}$, where $\mathrm{N}_1(X)_{\bq}:= \mathrm{N}_1(X) \otimes_{\bz} \bq$. We shall also need the following two lemmas:

\begin{lemma}\label{n1xbij}
If such a morphism $f$ is surjective, then so is the induced $\bq$-linear map $f_*$.
\end{lemma}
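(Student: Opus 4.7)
The plan is to show surjectivity by exhibiting, for every irreducible curve $C \subseteq Y$, a curve $C' \subseteq X$ whose pushforward equals a positive integer multiple of $[C]$ in $\mathrm{N}_1(Y)$. Since classes of irreducible curves generate $\mathrm{N}_1(Y)$, and hence span $\mathrm{N}_1(Y)_{\bq}$, this would force $[C] = \tfrac{1}{d}\,f_*[C']$ for some positive integer $d$, proving that $f_*$ is $\bq$-linearly surjective.

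The construction of $C'$ proceeds as follows. Fix an irreducible curve $C \subseteq Y$. Since $f$ is surjective, some irreducible component $V$ of the set-theoretic preimage $f^{-1}(C)$ must dominate $C$ under $f$; by the dimension formula for fibers of morphisms between projective varieties, one has $\dim V \geq \dim X - \dim Y + 1$, so in particular $\dim V \geq 1$. If $\dim V = 1$, we are done: take $C' = V$. Otherwise, fix a very ample divisor $H$ on $X$, and cut $V$ by $\dim V - 1$ general members of $|H|$. I would then argue that, for sufficiently general hyperplane sections $H_1,\dots,H_{\dim V - 1}$, the intersection $C' := V \cap H_1 \cap \cdots \cap H_{\dim V -1}$ is a curve whose image under $f$ still equals $C$: each general hyperplane drops the dimension of a general fiber of the dominant morphism $V \to C$ by exactly one, without being swallowed by a single fiber, so after $\dim V - 1$ steps the general fiber becomes finite and nonempty.

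Once $C'$ is constructed, the restriction $f|_{C'} \colon C' \to C$ is a dominant, hence surjective, morphism of projective curves, and therefore finite of some positive degree $d$. By the definition of the pushforward on $\mathrm{N}_1$, this yields $f_*[C'] = d \cdot [C]$ in $\mathrm{N}_1(Y)$. Tensoring with $\bq$ and inverting $d$ gives $[C] \in \im(f_*)_{\bq}$, as required.

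The main technical point to justify carefully is the Bertini-type claim that the iterated hyperplane sections of $V$ remain irreducible components dominating $C$. The standard way to handle this is to apply the fact that a general hyperplane section of an irreducible projective variety of dimension $\geq 2$ is irreducible (which requires no hypotheses on the characteristic, in contrast to sharper smoothness statements), combined with the observation that for a general $H_i$, no fiber of $V \to C$ is entirely contained in $H_i$, so the generic fiber dimension drops by exactly one. All other steps are formal manipulations with the definition of $f_*$ on cycle classes.
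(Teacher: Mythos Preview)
Your overall strategy matches the paper's exactly: for each irreducible curve $C \subseteq Y$, exhibit a curve $C' \subseteq X$ with $f(C') = C$, so that $f_*[C'] = d[C]$ for some $d > 0$ and hence $[C] \in \im f_*$ after inverting $d$. The only difference lies in how $C'$ is produced. The paper chooses two points $a, b$ in a component $X_i$ of $f^{-1}(C)$ dominating $C$, with $f(a) \neq f(b)$, and then invokes a Bertini-type result of Poonen (valid over arbitrary fields, including finite ones) to find a curve $C' \subseteq X_i$ through both; since $f(C')$ contains two distinct points of the irreducible curve $C$, it must be all of $C$.

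Your hyperplane-slicing route is viable, but the appeal to Bertini irreducibility is shaky in the paper's generality: that theorem needs $k$ infinite and the sliced variety geometrically irreducible, whereas here $k$ is arbitrary and a $k$-irreducible component $V$ of $f^{-1}(C)$ need not be geometrically irreducible. The good news is that irreducibility is not actually needed. When $\dim V \geq 2$ every fiber of $V \to C$ has positive dimension by upper semicontinuity, hence cannot lie in the affine complement of an ample divisor; so \emph{any} hyperplane section $V \cap H_1$ with $V \not\subseteq H_1$ still surjects onto $C$, and you may pass to an irreducible component dominating $C$ and repeat. This works over every field with no genericity hypothesis. Relative to the paper's argument, you trade the external citation to Poonen for an elementary induction on $\dim V$.
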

\begin{proof}
Let $C$ be a curve on $Y$, and consider the (surjective) restriction $f^{-1}(C) \to C$ of $f$. Then $\dim f^{-1}(C) \geq \dim C =1$. Denote by $X_1,\ldots,X_r$ the irreducible components of $f^{-1}(C)$, then $f$ has to surject some $X_i$ onto $C$. Choosing two distinct closed points $f(a)$ and $f(b)$ on $C$ with $a,b \in X_i$, there exists a curve $C'$ on $X_i$ that passes through $a$ and $b$ (by a special case of \cite[Corollary 1.9]{poonen2016bertini}). Hence $f(C')=C$, and $f_*C'=nC$ for some nonzero $n$. Passing to classes, we have $[C]=(1/n)f_*[C']=f_*[(1/n)C']$.
\end{proof}

\begin{lemma}\label{endofinite}
Let $(X,f)$ be a dynamical system. If $f$ is surjective, then it is finite.
\end{lemma}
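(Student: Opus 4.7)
My plan is to combine Lemma \ref{n1xbij} with the fact that positive-dimensional fibers would kill some nonzero numerical class of a curve, contradicting injectivity of $f_*$.

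First, since $f:X \to X$ is surjective, Lemma \ref{n1xbij} gives that the induced $\bq$-linear map $f_*:\mathrm{N}_1(X)_{\bq} \to \mathrm{N}_1(X)_{\bq}$ is surjective. Because $\mathrm{N}_1(X)_{\bq}$ is finite-dimensional, a surjective endomorphism is automatically injective, so $f_*$ is bijective.

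Next, I would argue that $f$ is proper (since $X$ is projective and $f$ is a morphism of $k$-schemes), so if $f$ fails to be finite it must fail to have finite fibers; that is, some fiber $F := f^{-1}(x)$ is positive-dimensional. Since $F$ is a projective scheme of dimension $\geq 1$, it contains a curve $C$ (take an irreducible component of dimension $\geq 1$ and cut by sufficiently many hyperplane sections, or appeal directly to the fact that every projective variety of positive dimension contains one-dimensional subvarieties). By construction $f(C) = \{x\}$ is a point, hence $f_*[C] = 0$ in $\mathrm{N}_1(X)_{\bq}$.

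On the other hand, fixing any ample divisor $H$ on $X$, the intersection number $C \cdot H$ is strictly positive because $H$ is ample and $C$ is an integral curve, so $[C] \neq 0$ in $\mathrm{N}_1(X)$, and a fortiori in $\mathrm{N}_1(X)_{\bq}$. This contradicts the injectivity of $f_*$ established in the first step, so no positive-dimensional fiber can exist. Thus $f$ is proper and quasi-finite, hence finite, as desired.

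There is no real obstacle here; the only subtle point to get right is the invocation of Lemma \ref{n1xbij} together with the finite-dimensionality trick (to upgrade surjectivity to bijectivity), after which everything reduces to the standard observation that curves contracted to a point have nonzero intersection with an ample class.
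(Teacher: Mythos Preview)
Your proof is correct and essentially identical to the paper's: both invoke Lemma \ref{n1xbij} together with the finite-dimensionality of $\mathrm{N}_1(X)_{\bq}$ to conclude that $f_*$ is bijective, then derive a contradiction from a contracted curve having nonzero numerical class. You merely spell out details (the ample-divisor argument for $[C]\neq 0$, and proper $+$ quasi-finite $\Rightarrow$ finite) that the paper leaves implicit.
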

We note that the converse is true and its proof is straightforward.
\begin{proof}
Suppose $f$ is surjective but not finite, then $f$ contracts some curve $C$ on $X$ to a point. Thus $f_*[C]=0$, where we view $[C]$ as an element of the $\bq$-vector space $\mathrm{N}_1(X)_{\bq}$. On the other hand, we have $[C] \neq 0$ since $X$ is projective,
contradicting the fact that $f_*:\mathrm{N}_1(X)_{\bq} \to \mathrm{N}_1(X)_{\bq}$ is an automorphism (following from Lemma \ref{n1xbij}).
\end{proof}

\begin{proof}[Proof of Theorem \ref{itimage}]
Suppose there exists a dynamical system $(X,f)$, with $X$ normal, whose iterated image is $Y$. Fix $n$ such that $f^n(X)=Y$. Consider the normalization $\nu:\tilde{Y} \to Y$, as well as the unique morphism $g:X\to \tilde{Y}$ such that $\nu \circ g = f^n$ (which exists by virtue of the universal property of $\nu$). We have the following commutative diagram, with $i:Y \to X$ being the inclusion:
\begin{center}
\begin{tikzcd}[arrows={-stealth}]
     &&     & Y \\
Y \ar[urrr, "{f^n_Y}" description, bend left] \ar[drrr, "{g\circ i}"' description, bend right] \ar[rr, "i", hook]    &&  X \ar[ur, "f^n"' description] \ar[dr, "g" description] & \\
     &&     & \widetilde{Y} \ar[uu, "\nu"']
\end{tikzcd}
\end{center}

We show that the morphism $g\circ i:Y \to \tilde{Y}$ is finite and surjective. Since $\nu$ is finite, the finiteness of $g\circ i$ is equivalent to that of $f^n_Y = f^n\circ i$  \cite[Proposition 6.1.5]{EGAII}. But the latter is surjective, hence finite by Lemma \ref{endofinite}. Surjectivity of $g\circ i$ follows directly, bearing in mind that $\dim Y = \dim \tilde{Y}$. Another way to prove that $g\circ i$ is surjective is noting that it is dominant (as $f^n_Y = \nu \circ (g\circ i)$ is surjective and $\nu$ is birational).

Conversely, suppose that $Y$ admits a finite morphism $h: Y \to \tilde{Y}$. Fix a closed immersion $j: Y \to \bp^m_k$ and set $X=\tilde{Y}\times_k \bp^m_k$. Then $X$ is a projective variety, normal by \cite[Proposition 6.14.1]{EGAIV2}. By the universal property of the fibered product $X$, there exists a unique morphism 
\[
\psi:Y\to X,\quad \psi(y) = (h(y),j(y)),
\]
which must be an immersion since $j$ is. In what follows, we view $Y$ as a subvariety of $X$ via $\psi$. We have an endomorphism 
\[
f:X \to X,\quad f(\tilde{y}, z) = (h(\nu(\tilde{y})), j(\nu(\tilde{y}))).
\]
We show the inclusions $f(X)\subseteq Y \subseteq f(Y) \subseteq f(X)$ (the last of which is clear), which will imply that $f(X)=Y$ and that $Y$ is the iterated image of $f$.
\[
\begin{tikzcd}[arrows={-stealth}]
    Y \ar[ddddr, bend right, "j"'] \ar[ddr, "\exists !\psi" description, dashed] \ar[ddrrr, bend left, "h"] &&& \\
    &&& \\    
    & X=\tilde{Y} \times_k \bp^m_k \ar[rr] \ar[dd]&& \tilde{Y} \ar[uulll,"\nu" description] \ar[dd]\\
    &&& \\
    & \bp^m_k \ar[rr] && \spec(k)
\end{tikzcd}
\]
First, note that for any closed point $(\tilde{y}, z)\in X$ we have $f(\tilde{y}, z)=\psi(\nu(\tilde{y}))$, where $\nu(\tilde{y})\in Y$, yielding $f(X)\subseteq Y$. Next, let $y\in Y$, then $y= \nu (\tilde{y})$ for some $\tilde{y}\in \tilde{Y}$ (since $\nu$ is surjective), and $\tilde{y} = h(w)$ for some $w \in Y$ (since $h$ is surjective as well), thus $y=\nu(h(w))$ and
\[
\psi(y)=(h(y),j(y))=(h(\nu(h(w))), j(\nu(h(w))))=f(h(w),z),
\]
for any $z\in \bp^m_k$. We can in particular write $\psi(y) = f(h(w), j(w))$, hence $\psi(y)=f(\psi(w))$ and $y\in f(Y)$.
\end{proof}

We conclude this subsection with the following natural questions posed by Lucy Moser-Jauslin: given a projective variety $X$, determine all closed subvarieties $Y$ of $X$ occuring as iterated images of endomorphisms of $X$. Does the condition that $Y$ admits a finite morphism $Y \to \tilde{Y}$ still characterize such iterated images?

\subsection{Proof of Proposition \ref{pinchprop}}\label{SCI}
We shall need a construction known as pinching, for which we use \cite{ferrand2003conducteur} as a reference. 

\begin{definition}\label{pinchdef}
Let $X$ be a scheme, let $Y$ be a closed subscheme of $X$, and let $g:Y \to Y'$ be an affine morphism of schemes. The amalgamated sum $X \amalg_{Y} Y'$ exists in the category of ringed spaces \cite[Paragraph 4.2 and Scolie 4.3]{ferrand2003conducteur}, forming a cocartesian square
\begin{equation}\label{cocart}
\begin{tikzcd}[arrows={-stealth}]
    Y \ar[r, "g"] \ar[d, hook, "i"']  & Y' \ar[d, "u"] \\
    X \ar[r, "p"]  & X \amalg_{Y} Y'
\end{tikzcd}
\end{equation}
If $X \amalg_{Y} Y'$ is a scheme, $p$ is affine, and $u$ is a closed immersion, then we call $X \amalg_{Y} Y'$ the \emph{pinching} of $X$ along $Y$ by $g$ . 
\end{definition}

\begin{theorem}\label{pinchingthm}
Let $X$ be a scheme, let $Y$ be a closed subscheme of $X$, and let $g:Y \to Y'$ be a finite morphism of schemes. Consider the cocartesian square \eqref{cocart}. If the schemes $X$ and $Y'$ satisfy the property that every finite subset of points is contained in an affine open, then 
\begin{enumerate}
    \item the ringed space $X \amalg_{Y} Y'$ is a scheme;
    \item every finite subset of points of $X \amalg_{Y} Y'$ is contained in an affine open;
    \item the square \eqref{cocart} is cartesian;
    \item the morphism $p$ is finite;
    \item the morphism $u$ is a closed immersion;
    \item $p$ induces an isomorphism $X\setminus Y \to (X \amalg_{Y} Y') \setminus Y'$.
\end{enumerate}
\end{theorem}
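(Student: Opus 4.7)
The plan is to reduce to the affine case and then globalize using the hypothesis on finite subsets.

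\textbf{Affine case.} Write $X = \spec A$, $Y = \spec(A/I)$ for an ideal $I \subseteq A$, and $Y' = \spec B$, with the finite morphism $g$ corresponding to a finite ring map $\varphi: B \to A/I$. I would take $C := A \times_{A/I} B$, the fiber product of rings (elements are pairs $(a,b)$ with $\bar a = \varphi(b)$), and show that $\spec C$ realizes the pinching. The projection $C \to B$ is surjective (lift any element of $A/I$ to $A$), yielding the closed immersion $u$ and proving the affine version of (5). Since $\varphi$ is finite, a module-theoretic argument using the short exact sequence $0 \to C \to A \oplus B \to A/I \to 0$ (with the rightmost map $(a,b) \mapsto \bar a - \varphi(b)$) shows that $C \to A$ makes $A$ a finite $C$-module, proving the affine version of (4). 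Cartesianness $A \otimes_C B \cong A/I$ follows from the same exact sequence and gives (3), while (6) reduces to $C \to A$ being an isomorphism after inverting any element of $I$, which is immediate from the sequence.

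\textbf{Globalization.} A point $z$ of the ringed space $X \amalg_Y Y'$ lies either in $X \setminus Y$ or in $Y'$; in the latter case, its preimage in $X$ under $p$ is the finite set $g^{-1}(z) \subseteq Y$. Applying the hypothesis, I choose an affine open $U \subseteq X$ containing this finite preimage, and an affine open $V' \subseteq Y'$ containing $z$; after passing to a principal open of $U$ I may further arrange $g(U \cap Y) \subseteq V'$, while still containing $g^{-1}(z)$ inside $U$. The affine pinching of $U$ along $U \cap Y$ by $g|_{U \cap Y}: U \cap Y \to V'$, constructed in the previous step, is then an affine open neighborhood of $z$ in $X \amalg_Y Y'$. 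These local pinchings agree on overlaps by the universal property of the fiber product of rings, and glue to a scheme structure on $X \amalg_Y Y'$, proving (1). Running the same argument starting from a finite subset of points of $X \amalg_Y Y'$ (using that its total preimage in $X$ is finite, together with its image in $Y'$) yields (2). Since properties (3)--(6) can each be checked locally on the target $X \amalg_Y Y'$, they reduce to their affine counterparts already established.

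\textbf{Main obstacle.} The pinching is not straightforwardly local on $X$: its formation requires simultaneously compatible affine data on both $X$ and $Y'$. The hypothesis that every finite subset of $X$ or $Y'$ lies in an affine open is precisely what permits the choice of such compatible neighborhoods, and verifying the compatibility (in particular, that one can always shrink $U$ to arrange $g(U \cap Y) \subseteq V'$ while preserving the prescribed preimages) is the technical heart of the argument, following Ferrand \cite{ferrand2003conducteur}.
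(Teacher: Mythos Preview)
The paper does not supply its own proof of this theorem: its entire argument is the one-line reference ``See \cite[Th\'eor\`eme 5.4]{ferrand2003conducteur}.'' Your proposal is a faithful outline of Ferrand's proof (affine fiber-product of rings, then globalization via the finite-subset-in-affine-open hypothesis), so it is fully consistent with---indeed strictly more detailed than---what the paper offers.

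One small caution on your globalization step: for the local affine pinching $U \amalg_{U\cap Y} V'$ to sit as an \emph{open} subscheme of the global $X \amalg_Y Y'$, you need not merely $g(U\cap Y)\subseteq V'$ but rather $U\cap Y = g^{-1}(V')$ (so that the local square is the restriction of the global one over $V'$). Arranging this requires first shrinking $V'$ to avoid $g(Y\setminus U)$ (possible since $g$ is closed) and then further shrinking $U$; this is exactly the ``compatibility'' you flag as the main obstacle, and Ferrand handles it carefully. With that refinement your sketch is correct.
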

In particular, $X \amalg_{Y} Y'$ is the pinching of $X$ along $Y$ by $g$.
\begin{proof}
See \cite[Th\'eor\`eme 5.4]{ferrand2003conducteur}.
\end{proof}

\begin{example}
Every variety is a pinching of its normalization. This follows from \cite[Th\'eor\`eme 5.1]{ferrand2003conducteur} together with the fact that the conductor can be defined globally as an ideal sheaf.
\end{example}

We recall the following notation and existence result, both of which will be used in the proof of Proposition \ref{pinchprop}. Let $E$ be an elliptic curve.
\begin{itemize}
    \item The multiplication-by-$n$ morphism $n_E: E \to E, x \mapsto nx$ is an isogeny of degree $n^2$, whose kernel is denoted by $E[n]$; a finite subgroup scheme of $E$.
    \item Let $F$ be a finite subgroup scheme of $E$. There exist a unique elliptic curve $E'$ and an isogeny $f: E \to E'$ with kernel $F$. The elliptic curve $E'$ is called the \emph{quotient} of $E$ by $F$, and denoted by $E/F$. See \cite[Proposition III.4.12]{silverman2009arithmetic}.
\end{itemize}

\begin{proof}[Proof of Proposition \ref{pinchprop}]
\begin{enumerate}
    \item Consider the composition 
    \[
    \begin{tikzcd}[arrows={-stealth}]
    \tilde{Y} \ar[r, "\nu"] & Y \ar[r, "h"] & \tilde{Y}.
    \end{tikzcd}
    \]
    Then $\deg(h \circ \nu)=\deg(h) \cdot \deg(\nu) = \deg(h) \geq 2$ (where the last inequality follows from the assumption that $Y$ is singular). It follows that $g \leq 1$: indeed, if $g \geq 2$, then every nonconstant endomorphisms of $X$ is an automorphism by \cite[Example IV.2.5.4]{hartalggeom}.
    \item By projective Noether normalization, there exists a finite morphism $Y \to \bp_k^1$. (This argument holds more generally for $k$ infinite and of any characteristic.)
    \item 
        \begin{enumerate}
            \item Choosing a $k$-rational point $0 \in \tilde{Y}(k)$ as origin, we get the elliptic curve $(\tilde{Y},0)$. Then we can write (in a unique way) $h\circ \nu = \tau_{z_0} \circ f$, where $\tau_{z_0}$ is the translation by some $z_0 \in \tilde{Y}(k)$, and $f$ is an endomorphism of $\tilde{Y}$ fixing $0$. Hence $(h \circ \nu)(z) = f(z) + z_0$ for all $z \in \tilde{Y}(k)$. 

            Next we claim that $h \circ \nu$ fixes some $y \in \tilde{Y}(k)$. Indeed, the equality $(h \circ \nu)(y) = y$ is equivalent to $f(y) - y = -z_0$, i.e. $(f - \id_{\tilde{Y}})(y)= -z_0$. Since 
            \[
            2 \leq \deg(h \circ \nu) = \deg(\tau_{z_0}) \cdot \deg(f) = \deg(f),
            \]
            we get $f \neq \id_{\tilde{Y}}$, so that $f- \id_{\tilde{Y}}$ is nontrivial, hence surjective. The claim follows. We have thus shown that $h \circ \nu$ is an isogeny of the elliptic curve $(\tilde{Y},y)$; its kernel is a finite subgroup scheme of $\tilde{Y}$.
    
            Finally, we show that for every $y \in Y$, there exists $z \in \tilde{Y}$ such that $\nu^{-1}(y) \subseteq z + \ker(h \circ \nu)$. Indeed, let $x \in \tilde{Y}$ be such that $y = \nu(x)$, then $h(y) = (h \circ \nu)(x)$. Since $\nu$ is surjective, $y=\nu(z)$ for some $z \in \tilde{Y}$. Therefore, $(h \circ \nu)(z) = (h \circ \nu)(x)$, i.e. $x-z \in \ker(h \circ \nu)$.
            \item If there exists a finite morphism $h:\tilde{Y} \to Y$, then it follows from the previous point $\text{(a)}$ that each $F_i$ is contained in some translate of $\ker(h \circ \nu)$. Conversely, the diagram 
                \[
                \begin{tikzcd}[arrows={-stealth}]
                \tilde{Y} & F_1 \coprod \cdots \coprod F_m  \ar[r] \ar[l, "i"'] & \spec(k) \coprod \cdots \coprod \spec(k)
                \end{tikzcd}
                \]
            (with $i$ being the inclusion, and the other arrow being induced by the structural morphisms $F_i \to \spec(k)$) can be completed into a cocartesian square, by Theorem \ref{pinchingthm}, as follows
                \begin{equation}\label{pinch}
                \begin{tikzcd}[arrows={-stealth}]
                F_1 \coprod \cdots \coprod F_m \ar[rr] \ar[dd, "i"'] && \spec(k) \coprod \cdots \coprod \spec(k) \ar[dd]\\
                \\
                \tilde{Y} \ar[rr, "p"] && Y
                \end{tikzcd}    
                \end{equation}
            We note that the pinching morphism $p: \tilde{Y} \to Y$ coincides with the normalization $\nu$ of $Y$: indeed, $p$ is integral (being finite by Theorem \ref{pinchingthm}), and birational (by the same theorem, as $p$ induces an isomorphism between the open subschemes $\tilde{Y}\setminus \left( F_1 \coprod \cdots \coprod F_m \right)$ and $Y\setminus \left( \spec(k) \coprod \cdots \coprod \spec(k) \right)$.

            On the other hand, since $F$ is a finite subgroup of $\tilde{Y}(k)$, it must be $n$-torsion for some $n \geq 1$. In other words, $F \subseteq \tilde{Y}[n]$. We thus get a commutative triangle of isogenies
            \begin{equation*}
            \begin{tikzcd}[arrows={-stealth}]
            \tilde{Y} \ar[r] \ar[dr, "g"'] & \tilde{Y}/F  \ar[d, dashed]\\
            & \tilde{Y}/\tilde{Y}[n] \cong \tilde{Y}
            \end{tikzcd}    
            \end{equation*}
            Consider the morphisms $g: \tilde{Y} \to \tilde{Y}$ (in the above triangle) and $\coprod_{j=1}^m \spec(k) \to \tilde{Y}$ arising from the fact that each inclusion $F_j \to \tilde{Y}$ factors through the structural morphism $F_j \to \spec(k)$. By the universal property of the cocartesian square \eqref{pinch}, there exists a unique morphism $h: Y \to \tilde{Y}$ making the following diagram commute
            \[
            \begin{tikzcd}[arrows={-stealth}]
                \coprod_{j=1}^m F_j \ar[r] \ar[d, "i"'] & \coprod_{j=1}^m \spec(k) \ar[d] \ar[dr] & \\
                \tilde{Y} \ar[r, "\nu" description] \ar[rr, bend right=30, "g" description] & Y \ar[r, "h" description, dashed] & \tilde{Y}/\tilde{Y}[n] \cong \tilde{Y}
            \end{tikzcd}
            \]
            Finally, since $g= h\circ \nu$ is quasi-finite, it follows directly that $h$ is quasi-finite, hence finite (a proper morphism is finite if and only if it is quasi-finite). \qedhere
            \end{enumerate}
\end{enumerate}
\end{proof}

\begin{example}\label{pinchingcounterex}
Suppose $\ch(k)= p >0$, and let $\tilde{Y}$ be a smooth curve defined over $\bbf_{p^r}$ for some $r$; in other words, $\tilde{Y} \cong X \times_{\bbf_{p^r}} \spec(k)$ for some smooth curve $X$ over $\bbf_{p^r}$. Let $Z$ be a fat point of $\tilde{Y}$ of degree no less than two such that $\mathscr{O}(Z)^{p^r} = k$. By Theorem \ref{pinchingthm}, we have a (nontrivial) pinching diagram 
\[
    \begin{tikzcd}[arrows={-stealth}]
    Z \ar[r, "\sigma"] \ar[d, "i"']  & \spec(k) \ar[d] \\
    \tilde{Y} \ar[r, "p"]  & Y = \tilde{Y} \amalg_{Z} \spec(k)
    \end{tikzcd}
\]
where $\sigma$ is the structural morphism of $Z$, $i$ is the inclusion, and $p$ is the pinching morphism (which coincides with the normaliztion $\nu: \tilde{Y} \to Y$). Consider the morphism $\spec(k) \to \tilde{Y}$ through which $i$ factors, as well as the relative Frobenius morphism $F_{\tilde{Y}/k,r}: \tilde{Y} \to \tilde{Y}^{(p^r)}$. We have $\tilde{Y}^{(p^r)} \cong \tilde{Y}$. By the universal property of pinching, there exists a unique morphism $h: Y \to \tilde{Y}$ making the following diagram commute
\[
    \begin{tikzcd}[arrows={-stealth}]
    Z \ar[r, "\sigma"] \ar[d, "i"']  & \spec(k) \ar[d] \ar[rd] & \\
    \tilde{Y} \ar[r, "p" description] \ar[rr, "F_{\tilde{Y}/k,r}"' description, bend right=30] & Y \ar[r, "h" description, dashed] & \tilde{Y} \cong \tilde{Y}^{(p^r)}
    \end{tikzcd}
\]
Finally, note that $h$ is finite.
\end{example}

\section{Proofs of Statements About Dynamical Stein Factorizations}\label{DSF}
\subsection{Proof of Proposition \ref{stability}}
Recall that the \emph{cone of curves} (or the \emph{Mori cone}) of a projective variety $X$ is the convex cone
\[
\mathrm{NE}(X):=\left\{ \sum_{i=1}^m a_i[C_i] \mid m\in \bn, a_i \in \br_{\geq 0}, [C_i]\in \mathrm{N}_1(X) \text{ is the class of a curve } C_i \subseteq X \right\}
\]
in $\mathrm{N}_1(X)_{\br} := \mathrm{N}_1(X) \otimes_\bz \br$. If $\pi:X \to Y$ is a morphism to a projective variety $Y$, then the \emph{relative cone} of $\pi$, denoted by $\mathrm{NE}(\pi)$, is the convex subcone of $\mathrm{NE}(X)$ generated by the classes of curves on $X$ contracted by $\pi$. By \cite[Proposition 1.14]{debarre2001higher}, $\mathrm{NE}(\pi)$ is \emph{extremal} in $\mathrm{NE}(X)$: for any $a,b \in \mathrm{NE}(X)$, if $a+b \in \mathrm{NE}(\pi)$ then $a,b \in \mathrm{NE}(\pi)$.

The following convex-geometric lemma will also be of use.

\begin{lemma}\label{subcone}
Let $\sigma$ be a closed convex cone in $\br^d$. 
\begin{enumerate}
    \item The linear span of $\sigma$ is $\spn(\sigma) = \{ x-y \mid x,y \in \sigma\}$.
    \item If $\tau$ is an extremal subcone of $\sigma$, then $\tau = \sigma \cap \spn(\tau)$. In particular, $\tau$ is closed in $\sigma$.
    \item Every increasing chain of closed extremal subcones of $\sigma$:
        \begin{equation}\label{extremalsubcones}
            \tau_1 \subseteq \tau_2 \subseteq \tau_3 \subseteq \cdots    
        \end{equation}
    stabilizes.
\end{enumerate}
\end{lemma}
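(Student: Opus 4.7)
The three parts build on each other, so I would prove them in order, relying only on finite-dimensionality of $\br^d$ and elementary convex geometry. Throughout, I implicitly use that a nonempty closed convex cone $\sigma$ contains $0$ and is closed under nonnegative linear combinations.

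For part (1), the plan is to show that $D := \{x-y \mid x,y \in \sigma\}$ is itself a linear subspace: closure under addition is immediate from conic closure of $\sigma$, and closure under scalar multiplication follows by splitting the scalar by sign and using $\sigma + \sigma \subseteq \sigma$ together with $\br_{\geq 0}\cdot \sigma \subseteq \sigma$. Since $D \supseteq \sigma$ (taking $y = 0$) and $D \subseteq \spn(\sigma)$ by definition, the equality $D = \spn(\sigma)$ follows.

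For part (2), the inclusion $\tau \subseteq \sigma \cap \spn(\tau)$ is automatic. For the reverse, given $z \in \sigma \cap \spn(\tau)$, I would apply part (1) to the cone $\tau$ itself to write $z = x - y$ with $x, y \in \tau$. The identity $x = y + z$ then expresses $x \in \tau$ as a sum of two elements of $\sigma$ (namely $y \in \tau \subseteq \sigma$ and $z \in \sigma$), so extremality of $\tau$ in $\sigma$ forces $z \in \tau$. Closedness of $\tau$ in $\sigma$ then follows for free, since linear subspaces of $\br^d$ are closed.

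For part (3), I would exploit part (2) by passing to linear spans: the chain $\tau_1 \subseteq \tau_2 \subseteq \cdots$ induces an ascending chain $\spn(\tau_1) \subseteq \spn(\tau_2) \subseteq \cdots$ of linear subspaces of $\br^d$, which must stabilize after finitely many steps by finite-dimensionality. Intersecting back with $\sigma$ and invoking the identity $\tau_n = \sigma \cap \spn(\tau_n)$ from part (2) then gives the stabilization of the original chain.

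The only non-formal step in the argument is the use of extremality in part (2); everything else is bookkeeping. I do not anticipate any substantive obstacle.
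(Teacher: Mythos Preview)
Your proposal is correct and follows essentially the same route as the paper's proof: verify that the difference set is a subspace, use part~(1) applied to $\tau$ together with extremality to get $\tau = \sigma \cap \spn(\tau)$, and then pass to the chain of spans and invoke finite-dimensionality. The arguments match step for step, with only cosmetic differences in how closure under scalars is phrased in part~(1).
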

\begin{proof}
\begin{enumerate}
    \item The set $\{ x-y \mid x,y \in \sigma \}$ is a subspace of $\br^d$ since it contains $0$, is closed under multiplication by $-1$, and since $\sigma$ is closed under addition and positive scalar multiplication. Moreover, this subspace contains $\sigma$ (as $x = x-0$ for all $x \in \sigma$), hence $\spn(\sigma) \subseteq \{ x-y \mid x,y \in  \sigma \}$. The other inclusion follows readily as $x -y \in \spn(\sigma)$ for all $x,y \in \sigma$.
    \item One containment follows from $\tau \subseteq \sigma$ and $\tau \subseteq \spn(\tau)$. Conversely, let $x \in \sigma \cap \spn(\tau)$, then $x \in \spn(\tau)$ implies, by the previous point, that $x = z -y$ for some $y,z \in \tau$, hence $x+y =z \in \tau$. Since $\tau$ is an extremal subcone of $\sigma$, and since $x, y \in \sigma$, we get $x \in \tau$.
    \item By the previous point, $\tau_i = \sigma \cap \spn(\tau_i)$ for all $i$ (it can thus be observed that $\tau_i$ and $\spn(\tau_i)$ determine one another). The chain \eqref{extremalsubcones} corresponds to an increasing chain of subspaces 
    \[
    \spn(\tau_1) \subseteq \spn(\tau_2) \subseteq \cdots \subseteq \br^d,
    \]
    which stabilizes for dimension reasons. Hence \eqref{extremalsubcones} stabilizes as well. \qedhere
\end{enumerate}
\end{proof}

\begin{lemma}\label{steinlemma}
Let $(X,f)$ be a dynamical system. Consider the $n$th Stein factorization \eqref{nstein2} of $f$, as well as the relative cones $\mathrm{NE}(f^n)$ and $\mathrm{NE}(g_n)$ in the cone of curves $\mathrm{NE}(X) \subseteq \mathrm{N}_1(X)_{\br} \cong \br^{\rho}$, where $\rho$ is the Picard number of $X$.
\begin{enumerate}
    \item The closures $\overline{\mathrm{NE}}(f^n)$ in $\br^{\rho}$ form a chain, which starts increasing, and eventually stabilizes.
    \item We have $\mathrm{NE}(f^n) = \mathrm{NE}(g_n)$ for every $n$.
\end{enumerate}
\end{lemma}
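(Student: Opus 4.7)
The plan is as follows. For part (2), I would argue that a curve $C \subseteq X$ is contracted by $f^n = h_n \circ g_n$ if and only if it is contracted by $g_n$: finite morphisms do not contract any curve, so if $g_n(C)$ were a curve, so would be $h_n(g_n(C)) = f^n(C)$, and the converse is immediate. Hence the set of classes of curves contracted by $f^n$ equals the set of classes of curves contracted by $g_n$, and $\mathrm{NE}(f^n) = \mathrm{NE}(g_n)$.

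For part (1), I would first verify that the chain is increasing: if $f^n$ sends a curve $C$ to a point, then $f^{n+1}(C) = f(f^n(C))$ is also a point, so $\mathrm{NE}(f^n) \subseteq \mathrm{NE}(f^{n+1})$, and taking closures in $\br^\rho$ preserves this inclusion.

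To establish stabilization, I would pass to the linear spans $V_n := \spn(\overline{\mathrm{NE}}(f^n)) = \spn(\mathrm{NE}(f^n))$ inside $\br^\rho$. These form an increasing chain of subspaces, hence stabilize at some index $N$ for dimension reasons. For $n \geq N$, to see that $\mathrm{NE}(f^n) = \mathrm{NE}(f^{n+1})$, take $x \in \mathrm{NE}(f^{n+1}) \subseteq V_{n+1} = V_n$; by Lemma \ref{subcone}(1), write $x = y - z$ with $y, z \in \mathrm{NE}(f^n)$. Then $y = x + z \in \mathrm{NE}(f^n)$, with $x, z \in \mathrm{NE}(X)$. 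Since $\mathrm{NE}(f^n) = \mathrm{NE}(g_n)$ is extremal in $\mathrm{NE}(X)$ by part (2) together with \cite[Proposition 1.14]{debarre2001higher}, the face property forces $x \in \mathrm{NE}(f^n)$. This gives $\mathrm{NE}(f^{n+1}) = \mathrm{NE}(f^n)$, and hence $\overline{\mathrm{NE}}(f^{n+1}) = \overline{\mathrm{NE}}(f^n)$, for all $n \geq N$.

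The main obstacle I anticipate is the following subtlety: the Mori cone $\mathrm{NE}(X)$ is in general not closed in $\br^\rho$, so Lemma \ref{subcone}(3) cannot be applied directly to the chain $\bigl(\overline{\mathrm{NE}}(f^n)\bigr)$ viewed as extremal subcones of some closed convex cone (in particular, it is not obvious that $\overline{\mathrm{NE}}(f^n)$ is a face of $\overline{\mathrm{NE}}(X)$). My workaround is to reduce stabilization of the cones to stabilization of their spans, reworking the content of Lemma \ref{subcone}(2) by hand inside the (possibly non-closed) cone $\mathrm{NE}(X)$, where the extremality of $\mathrm{NE}(f^n)$ is supplied by Debarre's result.
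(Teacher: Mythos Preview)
Your argument is correct, and for part (2) and the ``increasing'' half of part (1) it is identical to the paper's. For stabilization the paper is terser: it simply invokes Lemma~\ref{subcone}(3) on the chain $\overline{\mathrm{NE}}(f^n)$ inside the closed cone $\overline{\mathrm{NE}}(X)$, implicitly using that $\overline{\mathrm{NE}}(f^n)$ is extremal in $\overline{\mathrm{NE}}(X)$ (this is in fact what \cite[Proposition~1.14]{debarre2001higher} gives, via the pullback of an ample class on the target, which is nef on $\overline{\mathrm{NE}}(X)$ and cuts out exactly the relative subcone). Your concern about closures is legitimate given that the paper only quotes the non-closed extremality statement in the preamble, and your workaround---stabilize the spans and then run the face argument by hand inside $\mathrm{NE}(X)$---is exactly the content of Lemma~\ref{subcone}(2)--(3) reworked for a non-closed ambient cone. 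So the two approaches differ only in packaging: the paper cites the lemma wholesale (relying on the closed version of Debarre's extremality), while you unpack it and thereby sidestep the need to know that closures of extremal subcones remain extremal. Both are valid; yours is slightly more self-contained.
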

\begin{proof}
\begin{enumerate}
    \item If $f^n$ contracts a curve to a point, then $f^{n+1}$ contracts that curve as well, i.e. $\mathrm{NE}(f^n) \subseteq \mathrm{NE}(f^{n+1})$. Passing to closures, we see that the chain 
    \[
    \overline{\mathrm{NE}}(f) \subseteq \overline{\mathrm{NE}}(f^2) \subseteq \overline{\mathrm{NE}}(f^3) \subseteq \ldots 
    \]
    stabilizes by the third point of Lemma \ref{subcone}.
    \item Since $h'_n$ is finite, it contracts no curves. Hence the curves contracted by $f^n$ are exactly those contracted by $g_n$.
    \qedhere
\end{enumerate}
\end{proof}

We are now ready to provide the 
\begin{proof}[Proof of Proposition \ref{stability}]
By Lemma \ref{steinlemma}, we have $\overline{\mathrm{NE}}(f^n) = \overline{\mathrm{NE}}(g_n)$, and the chain
    \[
    \overline{\mathrm{NE}}(g_1) \subseteq \overline{\mathrm{NE}}(g_2) \subseteq \overline{\mathrm{NE}}(g_3) \subseteq \ldots 
    \] 
stabilizes, hence so does the chain 
    \[
    \overline{\mathrm{NE}}(g_1) \cap \mathrm{NE}(X) \subseteq \overline{\mathrm{NE}}(g_2) \cap \mathrm{NE}(X) \subseteq \ldots,
    \]
where $\overline{\mathrm{NE}}(g_n) \cap \mathrm{NE}(X)$ is the closure of $\mathrm{NE}(g_n)$ in $\mathrm{NE}(X)$ by basic topology. But each $\mathrm{NE}(g_n)$ is closed in $\mathrm{NE}(X)$ by \cite[\S 1.12]{debarre2001higher}, so we get $\overline{\mathrm{NE}}(g_n) \cap \mathrm{NE}(X) = \mathrm{NE}(g_n)$, hence
    \[
    \mathrm{NE}(g_1) \subseteq \mathrm{NE}(g_2) \subseteq \mathrm{NE}(g_3) \subseteq \ldots
    \]
stabilizes. As $g_n$ is a contraction, it is determined by $\mathrm{NE}(g_n)$, showing that the sequence $(g_n)$ stabilizes. It follows from the surjectivity of the $g_n$ that the sequence $(Z_n)$ stabilizes as well.
\end{proof}

\begin{remark}\label{nsteinnimage}
Let $(X,f)$ be a dynamical system, and consider its sequence of Stein factorizations \eqref{nstein2}. Denote by $N_{\mathrm{image}}$ and $N_{\mathrm{Stein}}$ the smallest positive integers at which the sequences $(f^n(X))$ and $(Z_n)$ stabilize, respectively. Then 
\[
N_{\mathrm{image}} \leq \min\{N_{\mathrm{Stein}}, \dim X\}.
\]
Indeed, as $f^n(X) = h_n(Z_n)$, the chain $(f^n(X))$ stabilizes once the sequence $(Z_n)$ does, hence $N_{\mathrm{image}} \leq N_{\mathrm{Stein}}$. Moreover, we see from the following chain of closed subvarieties of $X$:
\[
X \supsetneqq f(X) \supsetneqq f^2(X) \supsetneqq  \cdots \supsetneqq f^{N_{\mathrm{image}}}(X)
\]
that $N_{\mathrm{image}} \leq \dim X$.
\end{remark}

\subsection{Proof of Proposition \ref{steinfactoriteratedimage}}
\begin{proof}[\indent\unskip\nopunct]
We will show that both statements of Proposition \ref{steinfactoriteratedimage} hold for any $n \geq N_{\mathrm{Stein}}$.
\begin{enumerate}
    \item Consider the following diagram
    \begin{equation*}
    \begin{tikzcd}[arrows={-stealth}]
        X \ar[rrrrdd, bend left=20, "f^n" description] \ar[dd, "g_n = g_{n+1}"'] &&  && \\
        \\
        Z_n = Z_{n+1}=Z \ar[rr, "h'_n"] \ar[ddrr, "h'_{n+1}"'] && f^n(X) =Y \ar[rr, hook] \ar[dd, "f_Y"] && X \ar[dd,"f"] \\
        \\
        && f^{n+1}(X) = Y \ar[rr, hook] && X
    \end{tikzcd}
    \end{equation*}
    combining the Stein factorizations of $f^n$ and $f^{n+1}$, with $h'_n$ and $h'_{n+1}$ finite and surjective. The bottom left triangle yields $\deg(h'_{n+1}) = \deg(f_Y) \cdot \deg(h'_n)$.
    \item Concatenating two copies of the $n$th Stein factorization of $(X,f)$ as follows
    \[
    \begin{tikzcd}[arrows={-stealth}]
    X \ar[rr, "f^n"] \ar[d, "g_n"'] && X \ar[rr, "f^n"] \ar[dr, "g_n"']  && X \\
    Z \ar[r, "h'_n"]   & Y \ar[ur, "i"'] && Z \ar[r, "h'_n"] & Y \ar[u, "i"']
    \end{tikzcd}
    \]
    we get the commutative diagram
    \[
    \begin{tikzcd}[arrows={-stealth}]
    Y \ar[rr, "f_Y^n"] \ar[dr, "g_n\circ i"'] && Y \ar[rr, "f_Y^n"] \ar[dr, "g_n\circ i"']  && Y \\
    & Z \ar[ur, "h'_n"']  && Z \ar[ur, "h'_n"'] &
    \end{tikzcd}
    \]
    which in turn yields the desired commutative square with $\varphi_n = g_n\circ i \circ h'_n$. \qedhere
\end{enumerate}
\end{proof}

\section{Proofs of Statements About Algebraic Endomorphisms}\label{AE}
We record some basic results about algebraic endomorphisms, before moving to the main results.
\begin{lemma}\label{falgqsf}
Suppose $k$ is separably closed. Let $(X,f)$ be a dynamical system, and consider the canonical projections $q_{\End_X}: \End_X \to \pi_0\End_X$ and $q_{S(f)}: S(f) \to \pi_0(S(f))$.
\begin{enumerate}
    \item We have $q_{\End_X}(S(f)) = \langle q_{\End_X}(f) \rangle = \{ q_{\End_X}(f)^n \mid n\geq 1 \}$.
    \item Similarly, we have $\pi_0 (S(f)) = \langle q_{S(f)}(f) \rangle = \{ q_{S(f)}(f)^n \mid n\geq 1 \}$.
    \item The following statements are equivalent:
    \begin{enumerate}
        \item $f$ is algebraic.
        \item $q_{\End_X}(S(f))$ is finite.
        \item $\pi_0 (S(f))$ is finite.
    \end{enumerate}
    In this case, we have $|\pi_0 (S(f))| \geq |q_{\End_X}(S(f))|$.
    \item All finite cyclic subsemigroups of the monoid $(\pi_0\End_X)(k)$ are of the form $\langle q_{\End_X}(g) \rangle$ for some algebraic $g \in \End_X(k)$.
\end{enumerate}
\end{lemma}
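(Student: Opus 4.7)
The overall strategy is to exploit two facts. First, both $q_{\End_X}$ and $q_{S(f)}$ are morphisms of monoid schemes, so they send $f^n$ to $q(f)^n$. Second, $\End_X$ is locally of finite type hence locally Noetherian, so its connected components are open (and similarly for $S(f)$, a closed subscheme of $\End_X$). Over the separably closed field $k$, the étale schemes $\pi_0 \End_X$ and $\pi_0(S(f))$ are reduced, and their $k$-points are in bijection with the connected components of $\End_X$ and $S(f)$ respectively.

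For part (1), the inclusion $\langle q_{\End_X}(f)\rangle \subseteq q_{\End_X}(S(f))$ is immediate. For the converse, I would argue that any connected component $Z$ of $\End_X$ missed by $\{f^n : n \geq 1\}$ has closed complement $\End_X \setminus Z$ (since components are open) containing $\langle f \rangle$, hence containing $S(f)$ by schematic density; so $Z$ does not meet $S(f)$. This gives a set-theoretic equality $q_{\End_X}(S(f)) = \langle q_{\End_X}(f)\rangle$, which upgrades to a scheme-theoretic equality since the target $\pi_0 \End_X$ is reduced. The exact same density argument, carried out inside $S(f)$ itself, yields part (2).

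For part (3), I would show (a) $\Leftrightarrow$ (c) by noting that $S(f)$ is locally of finite type, so it is of finite type iff it is quasi-compact iff its open cover by connected components is finite, iff $\pi_0(S(f))$ is finite. The induced monoid morphism $\pi_0(S(f)) \to \pi_0 \End_X$ has image $q_{\End_X}(S(f))$ by part (1), so if its domain is finite then so is its image; this gives (c) $\Rightarrow$ (b) together with the inequality $|\pi_0(S(f))| \geq |q_{\End_X}(S(f))|$. And (b) $\Rightarrow$ (a) follows from part (1) combined with criterion (ii) in Definition \ref{defsfgf}: $q_{\End_X}(S(f)) = \{q_{\End_X}(f^n) : n \geq 1\}$ being finite says the $f^n$ lie in finitely many components of $\End_X$.

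For part (4), given a finite cyclic subsemigroup $C = \langle c\rangle \subseteq (\pi_0 \End_X)(k)$, I would lift $c$ to some $g \in \End_X(k)$ with $q_{\End_X}(g) = c$; then $\langle q_{\End_X}(g)\rangle = \langle c \rangle = C$ is finite, so part (3) forces $g$ to be algebraic. The main obstacle is justifying the existence of such a lift: over separably closed $k$, the $k$-points of $\pi_0 \End_X$ are in bijection with connected components of $\End_X$, but one still needs each such component to carry a $k$-rational endomorphism. I expect this to follow from the fact that $q_{\End_X}$ is faithfully flat with étale target, together with the general surjectivity-on-$k$-points results for quotients of algebraic monoid schemes over separably closed fields from \cite{demazure1970groupes}.
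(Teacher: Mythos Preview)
Your proposal is correct and follows essentially the same approach as the paper. For parts (1), (2), and (4) the arguments are identical in substance: the paper phrases (1) as ``the image of a dense set is dense, and the target is discrete,'' which is your complement-of-a-component argument said differently; and for (4) the paper simply asserts the lift $a = q_{\End_X}(g)$ exists without the justification you flag as needed. In part (3) the only difference is the cycle of implications: you prove (a)$\Leftrightarrow$(c), (c)$\Rightarrow$(b), (b)$\Rightarrow$(a), whereas the paper proves (a)$\Leftrightarrow$(b), (c)$\Rightarrow$(b), (a)$\Rightarrow$(c); both routes pass through the same commutative square relating $q_{S(f)}$, $q_{\End_X}$, and $\pi_0 i$, and both extract the inequality $|\pi_0(S(f))| \geq |q_{\End_X}(S(f))|$ from the surjection $\pi_0 i \colon \pi_0(S(f)) \twoheadrightarrow q_{\End_X}(S(f))$.
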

\begin{proof}
\begin{enumerate}
    \item As $\langle f \rangle$ is dense in $S(f)$, the image $q_{\End_X}(\langle f \rangle) = \langle q_{\End_X}(f) \rangle$ is dense in $q_{\End_X}(S(f))$, which has the discrete topology. The result follows.
    \item This is done exactly as in the previous point, bearing in mind that $\pi_0 (S(f)) = q_{S(f)}(S(f))$.
    \item We first show $(a)\Leftrightarrow (b)$. If $S(f)$ is of finite type, it has finitely many connected components, so that $q_{\End_X}(S(f))$ is finite. Conversely, if $q_{\End_X}(S(f)) = \{ a_1, \ldots, a_m\}$, then $q_{\End_X}^{-1}(q_{\End_X}(S(f))) = \bigcup_{i=1}^m q_{\End_X}^{-1}(a_i)$ is a union of connected components of $\End_X$ and contains $S(f)$, hence the latter is of finite type. 
    
    To show $(c)\Rightarrow (b)$, consider the inclusion $i: S(f) \to \End_X$; we have
    \begin{equation}\label{qsf}
        q_{\End_X}(S(f)) = q_{\End_X}(i(S(f))) = (\pi_0 i)(q_{S(f)}(S(f))) = (\pi_0 i)(\pi_0 (S(f))).
    \end{equation}
    Hence, if $\pi_0(S(f))$ is finite, then so is $q_{\End_X}(S(f))$. 
    \[
    \begin{tikzcd}[arrows={-stealth}]
    S(f) \ar[r, "i"] \ar[d, "q_{S(f)}"'] &  \End_X \ar[d, "q_{\End_X}"] \\
     \pi_0 (S(f)) \ar[r, "\pi_0 i"]   & \pi_0\End_X
    \end{tikzcd}
    \]    
    Finally, we show $(a)\Rightarrow (c)$: if $S(f)$ is of finite type, then the induced morphism $\pi_0i: \pi_0(S(f)) \to \pi_0\End_X$ has finite fibers (i.e. $\pi_0i$ is finite-to-one), hence $\pi_0(S(f))$ is finite by \eqref{qsf} since $q_{\End_X}(S(f))$ is finite.
    \item Consider a finite subsemigroup $\langle a \rangle$ of $(\pi_0\End_X)(k)$. Then $a = q_{\End_X}(g)$ for some $g \in \End_X(k)$, and, by the previous point, $g$ must be algebraic.   \qedhere
\end{enumerate}
\end{proof}

\begin{lemma}\label{lemmaaut0}
Let $(X,f)$ be a dynamical system.
\begin{enumerate}
    \item If $f \in \aut_X^0(k)$, then $f$ is algebraic.
    \item Conversely, if $f \in \aut_X(k)$ is algebraic, then some iterate of $f$ belongs to $\aut_X^0(k)$. 
\end{enumerate}
\end{lemma}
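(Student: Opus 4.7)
For part (1), the plan is to exploit the fact that $\aut_X^0$, as the neutral component of a locally algebraic group, is itself a connected algebraic (i.e. finite-type) subgroup of $\aut_X$, hence in particular a closed subscheme. If $f \in \aut_X^0(k)$, then every iterate $f^n$ lies in $\aut_X^0(k)$ since $\aut_X^0$ is closed under the group law, and therefore $\langle f \rangle \subseteq \aut_X^0(k)$. Passing to schematic closures yields $G(f) \subseteq \aut_X^0$, and since a closed subscheme of a finite-type scheme is of finite type, $G(f)$ is algebraic. By Definition \ref{defsfgf}, $f$ is an algebraic automorphism, and therefore also an algebraic endomorphism (as noted after Definition \ref{defsfgf}).

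For part (2), I would argue by passing to the component group. Assume $f \in \aut_X(k)$ is algebraic, so that $G(f)$ is an algebraic subgroup of $\aut_X$. Then $\pi_0 G(f)$ is a finite étale $k$-scheme, and the canonical projection $q_{G(f)}: G(f) \to \pi_0 G(f)$ has scheme-theoretic kernel $G(f)^0$. The $k$-point $q_{G(f)}(f)$ lies in the finite group $\pi_0 G(f)(k)$ and hence has some finite order $N$. Consequently $q_{G(f)}(f^N) = q_{G(f)}(f)^N$ is the neutral element, so $f^N \in G(f)^0(k)$. Since $G(f)^0$ is connected and contains $\id_X$, it is contained in the connected component $\aut_X^0$, whence $f^N \in \aut_X^0(k)$, as required.

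I do not anticipate any real obstacle: both parts are essentially formal consequences of the definition of algebraicity together with the standard structural facts that the neutral component of a locally algebraic group is a finite-type closed normal subgroup, and that an algebraic group has finitely many connected components. The only point requiring minor care is the inclusion $G(f)^0 \subseteq \aut_X^0$ in part (2), which rests on the observation that any connected subscheme of $\aut_X$ containing the identity is automatically contained in the connected component of the identity.
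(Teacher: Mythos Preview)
Your proposal is correct and follows essentially the same approach as the paper. For part (1) both arguments are identical; for part (2) the paper passes to $\pi_0\aut_X$ and asserts that the image of $f$ has finite order there (implicitly because algebraicity forces the iterates into finitely many components), whereas you pass to $\pi_0 G(f)$, which is immediately finite since $G(f)$ is algebraic, and then use the inclusion $G(f)^0 \subseteq \aut_X^0$. This is a minor variation in bookkeeping rather than a genuinely different route; your version is arguably cleaner in that it makes the finiteness step self-contained.
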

\begin{proof}
\begin{enumerate}
    \item We have $G(f) \subseteq \aut^0_X$ as $\aut^0_X$ is closed, hence $f$ is algebraic. See Example \ref{torsionendo}.
    \item Recall that $q_{\aut_X}$ is the restriction of $q_{\End_X}$ to $\aut_X$. The image $\overline{f} = q_{\End_X}(f)$ has finite order in $\pi_0\aut_X$, hence there exists $m \geq 1$ such that
    \[
    q_{\End_X}(f^m) = \overline{f^m} = \overline{f}^m = \overline{\id_X} = q_{\End_X}(\id_X), 
    \]
    implying $f^m \in \aut_X^0(k)$. \qedhere
\end{enumerate}
\end{proof}

\begin{remark}\label{algnotclosedcomp}
The set of all algebraic endomorphisms of a projective variety $X$ is generally not closed under composition. For instance, consider the abelian surface $X = E \times_k E$; the square of an elliptic curve $E$, then $(\pi_0\aut_X)(k)$ contains a copy of $\gl_2(\bz)$: indeed, the following action of $\gl_2(\bz)$ on $X$ 
\[
\begin{pmatrix}
  a & b\\
  c & d
\end{pmatrix}\cdot \begin{pmatrix}
  x \\
  y
\end{pmatrix} = 
\begin{pmatrix}
  ax+ by \\
  cx+ dy
\end{pmatrix}
\]
gives rise to automorphisms of $X$ fixing $0_X= (0_E,0_E)$. Now there exist elements $f,g \in \gl_2(\bz)$ of finite order such that $fg$ has infinite order. For example, taking 
\[
f= \begin{pmatrix}
  -1 & 1\\
  0 & 1
\end{pmatrix} \quad \text{ and } \quad
g= \begin{pmatrix}
  -1 & 0\\
  0 & 1
\end{pmatrix},
\]
we get $f^2 = g^2 = \id$ and 
$(fg)^n = \left(\begin{smallmatrix}
  1 & n\\
  0 & 1
\end{smallmatrix}\right)$
for every $n$. On the other hand, it is straightforward that the set of algebraic endomorphisms of $X$ is closed under conjugation by automorphisms. Finally, Theorem \ref{mainthm2} implies that the set of algebraic endomorphisms of $X$ is closed under taking powers and roots.
\end{remark}

\subsection{Proof of Theorem \ref{mainthm0}}
The argument uses an intermediate result that we record in the following lemma, the proof of which can be found in \cite[Proposition 3]{brion2014automorphisms}; we provide it here for the sake of completeness.
\begin{lemma}\label{mainlemma1}
Let $(X,f)$ be a dynamical system, with $f$ algebraic.
\begin{enumerate}
    \item The family $\Phi = \{ S(f^n) \mid n \geq 1 \}$ of closed subsemigroups of $S(f)$ has a smallest (i.e. least) element, which we denote by $S(f^{n_0})$.
    \item The semigroup $S(f^{n_0})$ is a connected algebraic group.
\end{enumerate}
\end{lemma}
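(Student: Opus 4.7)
The plan is to split the lemma into its two assertions.

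For part (1), I would first observe that $\Phi$ is downward directed: for any $m, n \geq 1$, the abstract inclusion $\langle f^{mn}\rangle \subseteq \langle f^m\rangle \cap \langle f^n\rangle$ passes to schematic closures, giving $S(f^{mn}) \subseteq S(f^m) \cap S(f^n)$. Because $f$ is algebraic, $S(f)$ is of finite type, hence Noetherian, so every nonempty family of closed subschemes admits a minimal member. Applying this to $\Phi$ yields some $S(f^{n_0})$ minimal. Directedness then upgrades minimality to smallest: for any $S(f^n) \in \Phi$, the element $S(f^{n \cdot n_0})$ is contained both in $S(f^{n_0})$ (forcing equality by minimality) and in $S(f^n)$, whence $S(f^{n_0}) \subseteq S(f^n)$.

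For part (2), I would set $S := S(f^{n_0})$ and note that part (1) yields $S = S(f^{k n_0})$ for every $k \geq 1$, so that $\langle f^{k n_0}\rangle$ is schematically dense in $S$ for every $k$. Handling connectedness first, I would pass to the canonical projection $q : S \to \pi_0 S$, a morphism of algebraic semigroups into a finite \'etale semigroup scheme. Then $q(f^{n_0})$ generates a finite cyclic subsemigroup, which by the classification of cyclic semigroups recalled in the introduction contains an idempotent $q(f^{N n_0})$ for some $N \geq 1$. Consequently $q(f^{k N n_0}) = q(f^{N n_0})$ for all $k \geq 1$, so the iterates $\{f^{k N n_0}\}_k$ lie entirely in the single connected component $C := q^{-1}(q(f^{N n_0}))$. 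Taking schematic closures, $S(f^{N n_0}) \subseteq C$, and minimality forces $S = S(f^{N n_0}) \subseteq C \subseteq S$, so that $S = C$ is connected.

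For the group structure, I would invoke the structure theory of commutative algebraic semigroups: the connected commutative algebraic semigroup $S$ admits a unique smallest closed ideal $K(S)$, and this ideal is a connected algebraic group with some neutral element $e \in S(k)$. Moreover, in the commutative setting one has $K(S) = eS$, since $K(S) \subseteq eS$ (as $e$ is the identity of $K(S)$) and $eS \subseteq K(S)$ (as $e \in K(S)$ and $K(S)$ is an ideal). The strategy is then to exhibit some iterate $f^{k n_0} \in K(S)$: once this is done, the minimality $S = S(f^{k n_0})$ forces $S \subseteq K(S)$, so $S = K(S)$ is a connected algebraic group. To produce such an iterate, I would apply the minimality property to the closed submonoid $eS = K(S)$ equipped with its dense cyclic subsemigroup $\langle e f^{n_0}\rangle$, which inherits the self-similarity $\overline{\langle (e f^{n_0})^k\rangle} = \overline{e \cdot \langle f^{k n_0}\rangle} = eS$ for every $k$ by continuity of the retraction $s \mapsto es$; inside the algebraic monoid $eS$ this generator $e f^{n_0}$ must then be a unit, and a careful comparison of this retraction with the sequence $(f^{k n_0})_k$ shows that some iterate is already fixed by it, i.e.\ lies in $K(S)$.

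The main obstacle will be precisely this last step: converting the self-similarity $S = S(f^{k n_0})$ into the absorption statement $f^{k n_0} \in K(S)$ for some $k$. Existence of the minimal ideal is a general structural fact, but verifying that the dense cyclic generator $f^{n_0}$ itself (or some iterate) lies in the kernel requires the nontrivial input that a commutative algebraic monoid possessing a Zariski-dense cyclic subsemigroup satisfying the above self-similarity is itself a group. This is exactly what is proved in \cite[Proposition 3]{brion2014automorphisms}, whose argument I would follow.
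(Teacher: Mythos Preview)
Your treatment of part~(1) and of connectedness in part~(2) coincides with the paper's: Noetherianity of $S(f)$ plus down-directedness gives the least element, and the passage to $\pi_0 S(f^{n_0})$ together with the idempotent in the finite cyclic image forces $S(f^{n_0})$ to equal a single fibre of $q$.

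For the group structure the paper is more direct. It does not attempt to place an iterate inside $eS$; it simply invokes \cite[Proposition~2]{brion2014automorphisms}, which for the commutative $S(f^{n_0})$ yields a unique idempotent $e$ and an isomorphism $S(f^{n_0})\cong eS(f^{n_0})$ onto a group scheme. That finishes the proof in one line.

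Your minimal-ideal route has a genuine gap at exactly the step you flag as the ``main obstacle''. First, deferring it to \cite[Proposition~3]{brion2014automorphisms} is circular: that proposition \emph{is} the present lemma (the paper says so explicitly), and its proof proceeds via Proposition~2 above, not by exhibiting an iterate in $K(S)$; following that argument would therefore bring you back to the paper's route rather than complete yours. Second, the absorption claim $f^{kn_0}\in K(S)$ cannot be extracted from the self-similarity $S=S(f^{kn_0})$ and general facts about connected commutative algebraic semigroups alone: take $S=(\ba^1_k,\cdot)$ in characteristic zero with generator $g=2$; then $\overline{\langle g^k\rangle}=S$ for every $k$, the minimal ideal is $K(S)=\{0\}$, yet no $g^k$ lies in it and $S$ is not a group. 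Some input specific to the situation---precisely what \cite[Proposition~2]{brion2014automorphisms} supplies---is required, and once you invoke it the detour through $K(S)$ becomes unnecessary.
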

\begin{proof}
\begin{enumerate}
    \item By noetherianity of $S(f)$ (following from $f$ being algebraic), the family $\Phi$ has a minimal element $S(f^{n_0})$. As $\Phi$ is down-directed: $S(f^m) \cap S(f^n) \supseteq S(f^{mn})$ for all $m,n \geq 1$, we see that $S(f^{n_0})$ is the smallest element in $\Phi$.
    \item Consider the canonical projection 
    \[
    q=q_{S(f^{n_0})}: S(f^{n_0}) \to \pi_0 S(f^{n_0}),
    \]
    and set $a = q(f^{n_0})$. Then $q(S(f^{n_0}))= \langle a \rangle$; a finite semigroup. Let $a^h = q(f^{n_0h})$ be the unique idempotent element in $\langle a \rangle$, then $q^{-1}(a^h)$ is a closed connected subsemigroup of $S(f^{n_0})$, and contains $f^{n_0h}$. We thus have $S(f^{n_0h}) \subseteq q^{-1}(a^h) \subseteq S(f^{n_0})$, implying, by minimality of $S(f^{n_0})$, that 
    \[
    S(f^{n_0h}) = q^{-1}(a^h) = S(f^{n_0}).
    \]
    In particular, $S(f^{n_0})$ is connected. 
    
    We show that $S(f^{n_0})$ is a group: by the first point of \cite[Proposition 2]{brion2014automorphisms}, since $S(f^{n_0})$ is commutative, it must contain a unique idempotent, denote it by $e$. By the second point of the same proposition, $S(f^{n_0})$ is isomorphic to its closed subsemigroup $eS(f^{n_0})$, and the latter is a group scheme.  \qedhere
\end{enumerate}
\end{proof}

We now turn to the

\begin{proof}[Proof of Theorem \ref{mainthm0}]
\begin{enumerate}
    \item Using the notation of Lemma \ref{mainlemma1}, we show that the neutral element $e$ in $S(f^{n_0})$ is the only idempotent in $S(f)$. If $e_1$ is idempotent in $S(f)$, then $e_1 =e_1^{n_0} \in S(f^{n_0})$ is idempotent in $S(f^{n_0})$, hence $e_1 =e$.
    \item As $S(f)$ is commutative, we have $eS(f) = eS(f)e$; this is a closed submonoid of $S(f)$ with neutral element $e$ and, by the previous point, has no other idempotents. Hence $eS(f)$ is a closed algebraic subgroup of $S(f)$ by \cite[Proposition 3(iii)]{brion2014algebraic}, denote it by $G$. For any $\ell \geq 0$, we have 
    \begin{equation}\label{fing}
        f^{n_0 + \ell} = f^{n_0} f^\ell = (ef^{n_0}) f^\ell = f^{n_0} (ef^\ell) =f^{n_0} (ef)^\ell  \in G
    \end{equation}
    since $f^{n_0} = ef^{n_0} \in G$ and $ef \in G$. In other words, $f^n \in G$ for all $n \geq n_0$.
    \item We have 
    \begin{align*}
        S(f)  = \overline{\{ f^n \mid n\geq 1 \}} & = \overline{\{ f, f^2, \ldots , f^{m-1} \}} \cup \overline{\{ f^n \mid n\geq m\}} \\
        & = \{ f, f^2, \ldots , f^{m-1} \} \cup \overline{\{ f^n \mid n\geq m\}} \\
        & \subseteq \{ f, f^2, \ldots , f^{m-1} \} \cup G \subseteq S(f).
    \end{align*}
    Hence $S(f) = \{ f, f^2, \ldots , f^{m-1} \} \cup G$. Note that $\{ f, f^2, \ldots , f^{m-1} \}$ and $G$ are disjoint: if $f^i \in G$ for some $i \leq m-1$, then $f^{i+\ell} \in G$ for all $\ell \geq 0$ as in \eqref{fing}, contradicting the choice of $m$. Also note that the iterates $f, f^2, \ldots , f^{m-1}$ are pairwise distinct: if $f^i = f^j$ for some $1 \leq i < j \leq m-1$, then 
    \[
    G \ni f^m = f^{j+(m-j)} = f^j f^{m-j} = f^i f^{m-j} = f^{m-(j-i)} \in \{ f, f^2, \ldots , f^{m-1} \},
    \]
    contradicting the aforementioned disjointness.
    \item Again, just as in \eqref{fing}, we have for all $\ell \geq 0$ that $f^{m+\ell} = f^m (ef)^\ell$. Thus
    \[
    G = \overline{\{ f^{m+\ell} \mid \ell \geq 0\}} = \overline{\{ f^m (ef)^\ell \mid \ell \geq 0\}} = \overline{\{ (ef)^\ell \mid \ell \geq 0\}},
    \]
    where the last equality follows from $f^m \langle ef \rangle = \langle ef \rangle$. Smoothness of $G$ follows from Remark \ref{monosmooth}. 
    \item Let $H$ be a closed subgroup of $\End_X$ such that $f^n \in H$ for all $n \geq n_1$ (with $n_1$ a positive integer), then $H \supseteq S(f^{n_1})$. Additionally, $S(f^{n_1}) \supseteq S(f^{n_0})$ by minimality of the latter. In particular, the neutral element of $H$ is $e$. If $h$ is the inverse of $f^{n_1}$ in $H$, then $hf^{n_1} = e$, which implies $hf^{n_1+1} = ef \in H$, hence $G \subseteq H$ since $ef$ is a generator of $G$ by the previous point. \qedhere
\end{enumerate}
\end{proof}

\begin{remark}
We note the following in connection with the fourth point of Theorem \ref{mainthm0}. Let $G = \overline{\{ g^n \mid n \in \bz \}}$ be a monothetic algebraic group, and consider the closed subsemigroup scheme $S = \overline{\{ g^n \mid n \geq 0 \}}$ of $G$. It follows by \cite[Lemma 9]{brion2014automorphisms} that $S$ is a closed \emph{subgroup} scheme of $G$. Since $S$ also contains $g$, we get $G=S$.
\end{remark}

\subsection{Proof of Theorem \ref{mainthm1}}
The following result is well known for $G$ an affine algebraic group or an abelian variety, but the general case seems to be unrecorded.

\begin{proposition}\label{monoinaut}
Let $G$ be an algebraic group. There exists a projective variety $X$ on which $G$ acts faithfully. 
\end{proposition}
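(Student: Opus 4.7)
The plan is to build $X$ in stages, combining the well-known cases of affine algebraic groups and abelian varieties via Chevalley's structure theorem and an associated-bundle construction. I would first reduce to $G$ smooth and connected: if $G^{0}$ acts faithfully on a projective variety $X_{0}$, then, since $G/G^{0}$ is finite étale, the twisted product $X := G \times^{G^{0}} X_{0}$ is a finite disjoint union of copies of $X_{0}$, hence projective, and $G$ acts faithfully on $X$ by left translation on the first factor. A direct check (using normality of $G^{0}$ and faithfulness of the $G^{0}$-action on $X_{0}$) yields faithfulness.

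Next, for $G$ smooth and connected, Chevalley's structure theorem supplies a short exact sequence $1 \to H \to G \to A \to 1$ with $H$ a normal closed affine subgroup and $A$ an abelian variety. The affine group $H$ embeds in some $\gl_{n}$ by the classical linearization theorem for affine algebraic groups. Although $\gl_{n}$ does not act faithfully on $\bp^{n-1}$ (its center acts trivially), it does act faithfully on $\bp^{n}$: indeed, the block embedding $\gl_{n} \hookrightarrow \gl_{n+1}$, $M \mapsto \diag(M,1)$, followed by projection to $\pgl_{n+1}$, is injective because the scalar center of $\gl_{n+1}$ intersects the image trivially. Hence $H$ acts faithfully on $Y := \bp^{n}$.

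Now form the associated bundle $X := G \times^{H} Y$, fibered over $A$. Since $Y = \bp^{n}$, this is a projective-space bundle over the abelian variety $A$ (the projectivization of a vector bundle on $A$), and is therefore projective. The group $G$ acts on $X$ by left translation on the first factor. Faithfulness is checked by unwinding the equivalence relation: for $g \in G$ to act trivially, one needs, for every $g' \in G$ and every $y \in Y$, an element $h = h(g') \in H$ satisfying $g' = gg'h^{-1}$ and $hy = y$. The first equation forces $h = g'^{-1}g g'$, and the requirement $h \in H$ for every $g'$ (in particular for $g' = e$) forces $g \in H$; then the second equation says $g'^{-1} g g' \in H$ acts trivially on $Y$ for all $g'$, and faithfulness of the $H$-action on $Y$ yields $g = e$.

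The main obstacle is the non-smooth case: Chevalley's structure theorem in the form I used requires smoothness, which is automatic over a perfect field but more delicate in general. To handle an arbitrary algebraic group scheme over $k$, one would either invoke a suitable generalization of Chevalley's decomposition valid for all algebraic group schemes (for instance by first passing to $G_{\mathrm{red}}$ over a perfect closure and descending, or by using an appropriate pseudo-reductive replacement) or embed $G$ into a smooth algebraic group and apply the smooth case. For the applications in this paper the groups in question are monothetic and hence smooth by Remark \ref{monosmooth}, so this subtlety does not affect the subsequent use of the proposition.
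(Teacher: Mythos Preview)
Your core construction---embed the affine part $H$ into $\gl_n \hookrightarrow \pgl_{n+1}$, form the associated $\bp^n$-bundle $X = G \times^{H} \bp^n$ over the abelian quotient, and let $G$ act by left translation---is exactly what the paper does, including the projectivity and faithfulness arguments (the paper cites \cite[Proposition 7.1]{mumford1965git} for the quotient). The substantive difference lies in how the affine subgroup $H$ with abelian quotient $G/H$ is produced. You use Chevalley's theorem after a reduction to the connected case, and you correctly flag that this leaves the non-smooth case open over imperfect fields. The paper sidesteps this entirely by invoking Brion's decomposition $G = H \cdot G_{\mathrm{ant}}$ \cite[Theorem 4.7]{brion2015extensions}, valid for arbitrary algebraic group schemes over arbitrary fields: here $H$ is affine, and $G/H \cong G_{\mathrm{ant}}/(G_{\mathrm{ant}} \cap H)$ is automatically an abelian variety because $G_{\mathrm{ant}}$ is smooth, connected, and commutative. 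This is precisely the ``suitable generalization of Chevalley's decomposition'' you allude to, and it renders the separate reduction to $G^0$ unnecessary.

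One further wrinkle in your reduction: the twisted product $G \times^{G^0} X_0$ is a finite disjoint union of copies of $X_0$, hence not a \emph{variety} in the paper's sense (geometrically integral); and passing to $G^0$ achieves connectedness but not smoothness, so it does not actually perform the reduction you announce. Both issues evaporate once one uses Brion's decomposition, since it applies directly to $G$ and already yields a connected quotient $G/H$, so the resulting bundle is geometrically integral. Your closing observation that the monothetic groups appearing in the applications are smooth anyway is correct, but the proposition as stated covers all algebraic groups.
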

\begin{proof}
By \cite[Theorem 4.7]{brion2015extensions}, we can write $G = H \cdot G_{\mathrm{ant}}$, where $H$ is an affine closed subgroup of $G$, and $G_{\mathrm{ant}}$ is the largest anti-affine (i.e. $\mathscr{O}(G_{\mathrm{ant}})=k$) closed subgroup of $G$.

We show that the quotient $G/H$ is an abelian variety. By the proof of the aforementioned theorem, $H$ contains a subgroup $N$ of $G$ such that $G/N$ is proper, implying that $G/H$ is proper (being the image of $G/N$ by the morphism $G/N \to G/H$). Moreover, as 
\[
G/H = H\cdot G_{\mathrm{ant}}/H = G_{\mathrm{ant}} / (G_{\mathrm{ant}} \cap H)
\]
and $G_{\mathrm{ant}}$ is commutative, smooth, and connected (by \cite[Corollary 8.14, Proposition 8.37]{milne2017groups}), the quotient $G/H$ has a structure of smooth (by \cite[Corollary 5.26]{milne2017groups}) connected algebraic group. Since $G/H$ is proper, it is an abelian variety. In particular, $G/H$ is projective, and the quotient morphism $G \to G/H$ is an $H$-torsor \cite[Definition 2.66]{milne2017groups}.

On the other hand, being affine, $H$ is isomorphic to a closed subgroup of some general linear group $\gl_n$, which in turn is a closed subgroup of $\pgl_{n+1} = \aut_{\bp_k^n}$. We thus get a faithful action of $H$ on $\bp_k^n$. Let $H$ act on $G$ by $h \cdot g = h^{-1}g$ and on $G \times_k \bp_k^n$ by $h \cdot (g,y) = (h^{-1}g , hy)$, then the projection $\pr_1: G \times_k \bp_k^n \to G$ is equivariant for these actions. It follows from \cite[Proposition 7.1]{mumford1965git} that there exists a quotient $X = (G \times_k \bp_k^n)/H$ and this quotient is a variety. Moreover, $X$ is projective over $G/H$, and the latter is projective over $k$.

Finally, $G$ acts faithfully on $X$ via multiplication on itself.
\end{proof}
\begin{proof}[Proof of Theorem \ref{mainthm1}]
\begin{enumerate}    
    \item As shown in Theorem \ref{mainthm0}, we have
    \[
    S(f) = \{ f, \ldots, f^{m-1} \} \coprod G = \{ f, \ldots, f^{m-1} \} \coprod \overline{\{ (ef)^\ell \mid \ell \geq 0 \}}.
    \]
    The multiplication of $S(f)$ is given as follows: for any $a,b \in S(f)$, we have
    \begin{itemize}
        \item If $a,b \in G$, then $ab \in G$, i.e. multiplication is performed within $G$.
        \item If $a = f^i$, with $1 \leq i \leq m-1$, and $b \in G$, then 
        \begin{equation}\label{fatimesh}
            ab= f^ib = f^i(eb) = (ef^i)b = (ef)^ib.
        \end{equation}
        \item If $a = f^i$ and $b = f^j$ with $1 \leq i,j \leq m-1$, then $ab =f^{i+j}$ whenever $i+j \leq m-1$, otherwise we can write $i+j = qm+r$ with $q \geq 0$ and $0\leq r \leq m-1$, then 
        \[
        ab= f^r f^{qm} = f^r (e f^{qm}) = f^r (ef)^{qm} = (ef)^{r+qm} = (ef)^{i+j},
        \]
        where the second to last equality follows from \eqref{fatimesh}.
    \end{itemize}
    \item By Proposition \ref{monoinaut}, there exists a projective variety $Y$ such that $\aut_Y \supseteq H$. Consider the projective variety $X = (\bp_k^1)^p \times_k Y$, and define the endomorphism
        \[
        f: X \to X, \quad f(x_1, \ldots, x_p, y)=(0, x_1, \ldots, x_{p-1}, h(y)).
        \]
    Then $f^p(x_1, \ldots, x_p, y)=(0, \ldots, 0, h^p(y))$, and $f^n \in H$ for all $n \geq p$, which implies that $f$ is algebraic. Evidently, $p$ is minimal. To show the last assertion, consider the endomorphism
    \[
    e: X \to X, \quad e(x_1, \ldots, x_p, y) = (0, \ldots, 0, y).
    \]
    Then $e$ is idempotent, and is the neutral element in $eS(f)$. (One can check by hand that $ef^n = f^n = f^ne$ for all $n \geq p$.) Finally, as $ef = h$ we have by virtue of the fourth point of Theorem \ref{mainthm0} that $eS(f) = H$.
    \qedhere
\end{enumerate}
\end{proof}

\subsection{Proof of Proposition \ref{mainprop2}}
\begin{proof}[\unskip\nopunct]
\begin{enumerate}
    \item Let $n\geq m$. Since $e$ is the neutral element of $G$, we have $f^n = ef^n$, implying $f^n(X) \subseteq Y$. Moreover, $f^n$ has an inverse $g=g(n) \in G$, so that $e = f^ng$, and hence $Y \subseteq f^n(X)$.
    \item This follows from 
    \[
    f^n = f^n e = (f^n i) r = (i f_Y^n) r,
    \]
    knowing that $r$ is a contraction (being a retraction) and $i \circ f^n_Y$ is finite.
    \item We will show that
    \[
    m = N_{\mathrm{image}} =  N_{\mathrm{Stein}} \leq \dim X.
    \]
    First, $ef^{N_{\mathrm{image}}} = f^{N_{\mathrm{image}}}$ as $e$ is the identity on $Y = f^{N_{\mathrm{image}}}(X)$, yielding $f^{N_{\mathrm{image}}} \in eS(f) =G$, hence $m \leq N_{\mathrm{image}}$ by the choice of $m$. Next, we have $N_{\mathrm{image}} \leq N_{\mathrm{Stein}}$ by Remark \ref{nsteinnimage}, and $N_{\mathrm{Stein}} \leq m$ by the previous point. Finally, $N_{\mathrm{image}} \leq \dim X$ by Remark \ref{nsteinnimage} once again.     \qedhere
\end{enumerate}
\end{proof}

\subsection{Proof of Theorem \ref{mainthm2}}
\begin{proof}[\indent\unskip\nopunct]
The implication $1 \Rightarrow 2$ follows from $S(f^n) \subseteq S(f)$, and $2 \Rightarrow 3$ is common sense. We show $3 \Rightarrow 1$: suppose $f^n$ is algebraic for some $n$. By the third point of Theorem \ref{mainthm0}, we have a decomposition
\[
S(f^n) = \{ f^n, f^{2n}, \ldots ,f^{(p-1)n} \} \coprod H,
\]
with $H$ the monothetic group associated with $f^n$, and $p$ the smallest positive integer such that $(f^n)^k = f^{nk} \in H$ for all $k \geq p$. Let $\ell \ge 1$, and write $\ell = (np)q + r$ with $0 \leq q$ and $0 \leq r < np$, then $f^{\ell} = (f^{np})^q f^r$.
    \begin{itemize}
        \item If $q =0$, then $(f^{np})^q = \id_X$ and $f^{\ell} \in \bigcup_{r=0}^{np-1} \{f^r\}$.
        \item If $q >0$, then $(f^{np})^q \in H$ and $f^{\ell} \in \bigcup_{r=0}^{np-1} f^r H$. 
    \end{itemize}
    Hence for every $\ell \geq 1$, we have
    \[
    f^{\ell} \in \bigcup_{r=0}^{np-1} \{f^r\} \cup \bigcup_{r=0}^{np-1} f^r H 
    \]
    and $f$ is algebraic.

For $1 \Rightarrow 4$, let $e$ be the unique idempotent in $S(f)$, then $Y=e(X)$ by the first point of Proposition \ref{mainprop2}. Write $e = i\circ r$, with $r:X \to Y$ a retraction and $i:Y \to X$ the inclusion, and let $m$ be the smallest positive integer such that $f^n \in G= eS(f)$ for all $n \geq m$.

Let us first show that $f_Y$ is an automorphism. Let $n > m$, and let $g \in G$ be the inverse of $f^n$, i.e. $f^n g = g f^n= e$. Note that $f^n$ stabilizes $Y$ (first point of Lemma \ref{fny}), and so does $g$: we have $Y= e(X) = g(f^n(X))=g(Y)$. Restricting to $Y$, we get 
\[
f^n_Y g_Y = g_Y f^n_Y = \id_Y,
\]
so that $f^n_Y$ is an automorphism. Moreover, $f_Y$ is an automorphism since $f_Y(f_Y^{n-1}g_Y) = \id_Y$ and $(g_Y f^{n-1}_Y)f_Y = (f^{n-1}_Y g_Y)f_Y = \id_Y$, where the first equality follows from the fact that $f^{n-1}, g \in G$ commute, and hence so do their restrictions to $Y$.
    \[
    \begin{tikzcd}[arrows={-stealth}]
    X \ar[rr, "g"] \ar[rrrr, "e"' description, bend left]  && X  \ar[rr, "f^n"] && X \\
    &&   &&\\
    Y \ar[uu, hook, "i"] \ar[rr, "g_Y"'] \ar[rrrr, "{\id_Y}"' description, bend right] && Y \ar[uu, hook, "i"] \ar[rr, "f^n_Y"'] && Y \ar[uu, hook, "i"']
\end{tikzcd}
    \]
    
Now we show that $f_Y$ is algebraic. Consider the morphism of monoid schemes 
    \[
    \theta: e\End_X e \to \End_Y, \quad g \mapsto r g i.
    \]
Then $\theta$ is an isomprphism: its inverse is given by $h \mapsto ihr$. As $S(f)$ is commutative, $ef = efe$, so that
    \[
    \theta (ef) = \theta (efe) = refei = rfi = f_Y.
    \]
Moreover, for any $n \geq m$, we have $ef^ne = ef^n = f^n$ (the last equality holds since $f^n \in G$ and $e$ is the neutral element in $G$). Then
    \[
    \theta(f^n) = \theta ((ef)^n) = \theta(ef)^n = (f_Y)^n = f_Y^n.
    \]
Note that $\theta$ restricts to an isomorphism $(e \End_X e)^{\times} \to \aut_Y$ between the respective unit group schemes. Also, the monothetic group $G$ is an algebraic subgroup of $(e \End_X e)^{\times}$: indeed, since $\theta(ef) = f_Y$ is an automorphism, we have $ef \in (e \End_X e)^{\times}$, hence $G \subseteq (e \End_X e)^{\times}$. Finally, since $f^n \in G$ for all $n \geq m$, we get $f^n_Y = \theta(f^n) \in \theta(G) \subseteq \aut_Y$, where $\theta(G) \cong G$; an \emph{algebraic} group, proving $f_Y$ is algebraic.

We show $4 \Rightarrow 3$. Note that $f_Y^\ell \in \aut_Y^0(k)$ for some $\ell \geq 1$ by the second point of Lemma \ref{lemmaaut0}. Consider the commutative triangle
\[
\begin{tikzcd}[arrows={-stealth}]
    X \ar[rr,"f^{\ell}"] \ar[drr,"{f^{2\ell}}"'] & & Y \ar[d, "f_Y^{\ell}"] \\
    &   & Y
\end{tikzcd}
\]
Then $f^\ell$ and $f^{2\ell}$ belong to the same connected component in $\End_X$. Hence all the iterates $f^{n\ell}$ belong to the same connected component in $\End_X$. In particular, $f^\ell$ is algebraic.

For the last assertion, note that 
\begin{align*}
G(f_Y) & = \overline{\{ f_Y^n \mid n \geq 0 \}} \\ 
   & = \overline{\{ \theta(ef)^n \mid n \geq 0 \}} \cong \overline{\{ (ef)^n \mid n \geq 0 \}} = S(ef) = G. \qedhere
\end{align*}
\end{proof}

\begin{remark}
Let $(X,f)$ be a dynamical system, with $f$ algebraic of iterated image $Y$. By Theorem \ref{mainthm2}, we have $\deg(f_Y)=1$, so Proposition \ref{steinfactoriteratedimage} implies that the numerical sequence $(\deg(h_n'))$ is eventually constant, and that the induced endomorphism $\varphi_n$ of the iterated Stein factor $Z$ is an automorphism for all $n \geq N_{\mathrm{Stein}}$. 
\end{remark}

\begin{remark}\label{algautocor}
Let $(X,f)$ be a dynamical system. The following assertions are equivalent:
        \begin{enumerate}
            \item $f$ is an algebraic automorphism.
            \item All iterates of $f$ are algebraic automorphisms.
            \item Some iterate of $f$ is an algebraic automorphism.
        \end{enumerate}
This follows from Theorem \ref{mainthm2} together with the fact that if $f^n$ is an automorphism for some $n$, then $f$ is an automorphism.
\end{remark}

\begin{example}\label{torsionendo}
If $f \in \End_X(k)$ satisfies $f^n \in \aut^0_X$ for some $n \geq 1$ (for example, $f^n =\id_X$; a torsion endomorphism), then $f$ is an algebraic automorphism by Remark \ref{algautocor}.
\end{example}

We conclude with Examples \ref{connsemigroup1} and \ref{connsemigroup2} illustrating the existence of connected semigroup schemes that are not algebraic. This situation does not occur with connected group schemes by \cite[II,\S 5,1.1]{demazure1970groupes}.

\begin{example}\label{connsemigroup1}
Let $P$ be a pencil of infinitely many copies of the multiplicative affine line $\ba_k^1$ sharing the same origin. For every $x,y$ in $P$, define $x \star y$ to be the usual product $xy$ if $x$ and $y$ lie on the same copy of $\ba_k^1$, and to be zero otherwise. Then $P$ is a connected semigroup scheme. Note that $P$ is not locally algebraic.
\end{example}

\begin{example}\label{connsemigroup2}
Consider an infinite zigzag $Z$ obtained as a union of copies of the multiplicative affine line $\ba^1_k$ indexed by $\bz$, such that for every $n \in \bz$, we make the following identifications:
\begin{itemize}
    \item If $n$ is even, then the point $0$ of the $n$th copy is identified with the point $0$ in the next.
    \item If $n$ is odd, then the point $1$ of the $n$th copy is identified with the point $1$ in the next.
\end{itemize}
Define $x \star y$ to be the usual product $xy$ if $x$ and $y$ lie on the same copy of $\ba_k^1$, and to be zero otherwise. Then $Z$ is a connected locally algebraic semigroup (clearly not algebraic). See Figure \ref{figureaffinezigzag}.

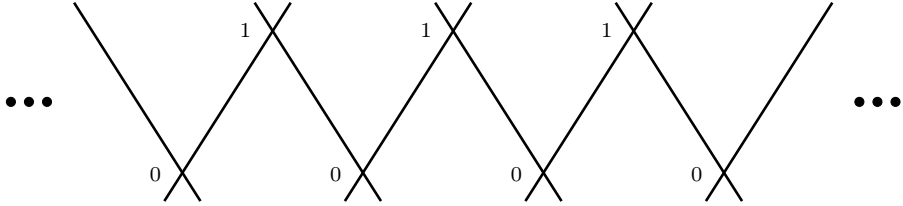
\begin{figure}[ht]
\centering
\begin{tikzpicture}[scale=1.2]
\def\xmax{8} 
\def\ymax{2} 

\foreach \i in {0,1,2,3} {
    \draw[line width=1pt] (\i*2-0.2, \ymax+0.1) -- (\i*2+1.2, -0.1);
    \draw[line width=1pt] (\i*2+0.8, -0.1) -- (\i*2+2.2, \ymax+0.1);
}

\foreach \i in {0,1,2,3} {
    \node[inner sep=0.5pt] at (\i*2+0.7, 0.2) {\scriptsize 0}; 
    \ifnum\i<3
        \node[inner sep=0.5pt] at (\i*2+1.7, \ymax-0.2) {\scriptsize 1}; 
    \fi
}

\foreach \i in {0,1,2} {
    \filldraw[black] (-0.5 - \i*0.2, 0.5*\ymax) circle (0.05cm);
}

\foreach \i in {0,1,2} {
    \filldraw[black] (\xmax+0.5 + \i*0.2, 0.5*\ymax) circle (0.05cm);
}
\end{tikzpicture}
\caption{An infinite zigzag of affine lines}
\label{figureaffinezigzag}
\end{figure}
\end{example}


\printbibliography



\end{document}